\documentclass[11pt,a4paper]{article}
\usepackage{amssymb}
\usepackage{eurosym}
\usepackage{amsfonts}
\usepackage{amsmath}
\usepackage{amsthm}
\usepackage{graphicx}
\usepackage{float}
\usepackage{hyperref}
\usepackage[pagewise]{lineno}

\hypersetup{
	colorlinks=true,
	linkcolor=blue,
	anchorcolor=blue,
	citecolor=blue
}
\newtheorem{theorem}{Theorem}[section]

\newtheorem{corollary}[theorem]{Corollary}
\newtheorem{lemma}[theorem]{Lemma}
\newtheorem{proposition}[theorem]{Proposition}
\theoremstyle{definition}
\newtheorem{definition}[theorem]{Definition}
\newtheorem{example}[theorem]{Example}

\newtheorem{remark}[theorem]{Remark}

\newtheorem*{coi}{Conflict of interest statement}
\newtheorem*{da}{Data availability}
\renewenvironment{proof}[1][Proof]{\noindent\textbf{#1.} }{\ \rule{0.5em}{0.5em}}
\newenvironment{acknowledgement}{\smallskip{\sc Acknowledgement.}\rm}{\smallskip}
\renewcommand{\theequation}{\thesection.\arabic{equation}}
\allowdisplaybreaks
\input{tcilatex}

\def\func#1{\mathop{\mathrm{#1}}\nolimits}

\def\dint{\displaystyle\int}

\def\Xint#1{\mathchoice
{\XXint\displaystyle\textstyle{#1}}%
{\XXint\textstyle\scriptstyle{#1}}%
{\XXint\scriptstyle\scriptscriptstyle{#1}}%
{\XXint\scriptscriptstyle\scriptscriptstyle{#1}}%
\!\int}
\def\XXint#1#2#3{{\setbox0=\hbox{$#1{#2#3}{\int}$ }
\vcenter{\hbox{$#2#3$ }}\kern-.6\wd0}}

\def\oint{\Xint-}

\def\enddoc{\end{document}}

\def\FRAME#1#2#3#4#5#6#7#8
{
 \begin{figure}[H]
 \begin{center}
 \includegraphics[height=#3]{#7}
 \caption{#5}
 \label{#6}
 \end{center}
 \end{figure}
}

\begin{document}
	\title{Relative Faber–Krahn inequalities and Trudinger's equation on Riemannian Manifolds}
	\author{Philipp S\"urig}
	\date{August 2026}
	\maketitle
	
	\begin{abstract}
		We consider on Riemannian manifolds the Trudinger equation \begin{equation*}\label{eqabs}\partial _{t}u=\Delta _{p}u^{\frac{1}{p-1}},\end{equation*} where $p>1$.
		We prove that a relative $p$--Faber–Krahn inequality is equivalent to the conjunction of volume doubling and a sub-Gaussian upper estimate for non-negative bounded weak subsolutions of Trudinger's equation. We also derive an improved long-time upper estimate under a uniform $p$-Faber–Krahn inequality.
	\end{abstract}
	
	\let\thefootnote\relax\footnotetext{\textit{\hskip-0.6truecm 2020 Mathematics Subject Classification.} 35K55, 58J35, 35K92. \newline
		\textit{Key words and phrases.} Trudinger equation, doubly nonlinear
		parabolic equation, Riemannian manifold. \newline
		The author was funded by the Deutsche Forschungsgemeinschaft (DFG,
		German Research Foundation) - Project-ID 317210226 - SFB 1283.}
	
	\tableofcontents
	
	\section{Introduction}
	
	Let $M$ be a Riemannian manifold. We consider solutions of the non-linear evolution
	equation 
	\begin{equation}
		\partial _{t}u=\Delta _{p}u^{\frac{1}{p-1}},  \label{evoeq}
	\end{equation}%
	where $p>1$, $u=u(x,t)$ is an unknown non-negative function of $x\in M$, $t\geq0$ and $%
	\Delta _{p}$ is the Riemannian $p$-Laplacian 
	$\Delta _{p}v=\func{div}\left( |\nabla v|^{p-2}\nabla v\right).$
	For the physical meaning of (\ref{evoeq}) see \cite{grigor2024finite, leibenzon1945general, leibenson1945turbulent}. 
	
	The equation (\ref{evoeq}) is also referred to as \textit{Trudinger's equation} \cite{trudinger1968pointwise} or a \textit{doubly non-linear parabolic equation}. In the case $p=2$, it becomes the classical \textit{heat equation} $\partial _{t}u=\Delta u$.

In the present paper we are interested in the relation between properties of solutions of (\ref{evoeq}) and the geometry of the underlying manifold. 

Let $M$ be a geodesically complete, non-compact Riemannian manifold. Denote by $\mu$ the \textit{Riemannian measure} on $M$, by $d$ the \textit{geodesic distance} and by $B(x, r)$ the \textit{geodesic ball} of radius $r$ centered at $x$.
Let us introduce the following geometric property.

\begin{definition}\label{defrfKint}
We say that $M$ satisfies a \textit{relative Faber-Krahn inequality} of order $p> 1$ if there exist positive constants $c, \nu$ so that, for all geodesic balls $B$ of radius $R$, and any non-negative $w\in {W}_{0}^{1,p}(B)$,
\begin{equation}
\dint_{B}\left\vert \nabla w\right\vert ^{p}\geq \dfrac{c}{R^{p}}\left(\dfrac{\mu (B)}{\mu (D)}\right) ^{\nu
}\dint_{B}w^{p},   \label{FKp}\tag{FK$_{p}$}
\end{equation} where $D=\left\{ w>0\right\}$. 
\end{definition} 

For example, the relative Faber-Krahn inequality holds if $M$ has non-negative \textit{Ricci-curvature} (see \cite{Buser, grigor, Saloff}).
In particular, in this case the value of $\nu$ can be chosen as follows:
\begin{equation}\label{nu}
	\nu =\left\{ 
	\begin{array}{ll}
		\dfrac{p}{n}, & \text{if }n>p, \\ 
		\text{any number}\in (0, 1), & \text{if }n\leq p,%
	\end{array}%
	\right.
\end{equation}    
where $n=\textnormal{dim}~M$.

\begin{definition}
We say that $M$ admits a \textit{sub-Gaussian upper estimate} if, for any non-negative $u_{0}\in L^{1}(M)\cap L^{\infty}(M)$ with compact support, any non-negative bounded subsolution $u$ of (\ref{evoeq}) in $M\times [0, \infty)$ with initial function $u_0$ satisfies, for all $x\in M$ and all $t>0$, \begin{equation}\label{uppergaussint}\tag{UE$_{p}$}||u(\cdot,t)||_{L^{\infty}\left(B(x, \frac{1}{2}t^{1/p})\right)}\leq\frac{C||u_{0}||_{L^{1}(M)}}{\mu(B(x, t^{1/p}))}\exp\left(-c\left(\frac{d(x,A)}{t^{1/p}}\right)^{\frac{p}{p-1}}\right),\end{equation} where $A=\textnormal{supp}~u_{0}$ and $c, C$ are positive constants.	
\end{definition}

One of the main purposes of the present paper is the characterization of the relative Faber-Krahn inequality \eqref{FKp} in terms of the estimate (\ref{uppergaussint}). 
In order to do that, we also need to introduce the following geometric property of a manifold.

\begin{definition}\normalfont We say that $M$ admits the \textit{volume doubling property} if there exists positive constants $C, N$ such that for all $x \in M$ and all $0<r\leq R$, \begin{equation}\tag{VD}\label{lemmaausdoubling}\frac{\mu(B(x, R))}{\mu(B(x, r))}\leq C\left(\frac{R}{r}\right)^{N}.\end{equation}
\end{definition}

In the linear case $p=2$, that is, (\ref{evoeq}) becomes the heat equation, it was proved by Grigor'yan \cite{grigor1994heat} that $$(\textnormal{FK}_2)\Leftrightarrow (\textnormal{VD})+(\textnormal{UE}_2).$$ 

Here we extend this result to the nonlinear setting. The first main result of the present paper is as follows.
\begin{theorem}\label{mainthmint}
Let $p>1$. Then 
$$\eqref{FKp}\Leftrightarrow \eqref{lemmaausdoubling}+\eqref{uppergaussint}.$$
\end{theorem}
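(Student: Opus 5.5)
We prove the two implications separately, following the scheme of Grigor'yan's linear argument and replacing the linear tools by their nonlinear counterparts for \eqref{evoeq}.

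\textbf{Direction \eqref{FKp} $\Rightarrow$ \eqref{lemmaausdoubling}+\eqref{uppergaussint}.} Volume doubling follows from \eqref{FKp} by a standard test-function argument. Take a ball $B=B(x,R)$, let $w$ be the cut-off function that equals $R/2-d(x,\cdot)$ on $B(x,R/2)$ and vanishes outside it, and insert $w$ into \eqref{FKp} on $B$. One has $\int|\nabla w|^p\le\mu(B(x,R/2))$ and $\int w^p\ge c R^p\mu(B(x,R/4))$. This gives $(\mu(B(x,R))/\mu(B(x,R/2)))^\nu\le C\,\mu(B(x,R/2))/\mu(B(x,R/4))$, and iterating this comparison across scales (as in the linear case) yields \eqref{lemmaausdoubling}.

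For \eqref{uppergaussint} we proceed in two steps.
\begin{enumerate}
\item \emph{Mean value inequality.} Run a De Giorgi / Moser iteration for non-negative bounded subsolutions on cylinders $B(x,r)\times[t-r^p,t]$. The local Faber–Krahn inequality \eqref{FKp}, applied to the truncations $(u^{1/(p-1)}-k)_+$, replaces Sobolev's inequality. The homogeneity of \eqref{evoeq} (if $u$ solves it, so does $\lambda u$, and the time scale is $r^p$ independently of the amplitude) makes the iteration produce an $L^\infty$–$L^1$ bound of the form $\sup u\le C\,\mu(B)^{-1}\int_{\text{cylinder}} u$.
\item \emph{Tail control.} Combine the mean value inequality with the conservation/contraction property $\int u(\cdot,t)\le\|u_0\|_{L^1}$, obtained by testing against cut-offs. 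Weighted integral estimates with an exponential weight $\exp(\xi(x,t))$, where $\xi$ solves the relevant Hamilton–Jacobi type inequality $\partial_t\xi+c|\nabla\xi|^{p/(p-1)}\le0$ (for instance $\xi=-c\,(d(x,A)^{p}/t)^{1/(p-1)}$), then show that $\int_{M\setminus B(A,\rho)}u(\cdot,t)$ decays like $\exp(-c(\rho/t^{1/p})^{p/(p-1)})$.
\end{enumerate}
Plugging this decay into the mean value inequality on $B(x,t^{1/p})$ and using \eqref{lemmaausdoubling} to compare volumes gives \eqref{uppergaussint}.

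\textbf{Direction \eqref{lemmaausdoubling}+\eqref{uppergaussint} $\Rightarrow$ \eqref{FKp}.} Fix a ball $B$ of radius $R$ and an open set $D\subset B$. It suffices to bound the first Dirichlet eigenvalue $\lambda_{1,p}(D)\ge cR^{-p}(\mu(B)/\mu(D))^\nu$.
\begin{enumerate}
\item Let $\phi\ge0$ be the $p$-eigenfunction of $D$, normalized in $L^\infty$. The separable function $u=T(t)\,\phi^{p-1}$, extended by zero outside $D$, is a bounded subsolution of \eqref{evoeq} in $M$: it is a solution in $D$, and extension by zero of a non-negative Dirichlet solution preserves the subsolution property. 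The time profile $T$ is explicit: for $p\ne2$ it is a power of $(1+c\lambda t)$, and for $p=2$ it is exponential.
\item Apply \eqref{uppergaussint} with $d(x,A)\ge0$ and $\mu(D)$ in place of the $L^1$ norm, namely $\|u_0\|_{L^1}\le\mu(D)$. This gives $T(t)\sup\phi^{p-1}\le C\mu(D)/\mu(B(x,t^{1/p}))$ for $t\le R^p$.
\item Choose $t\approx R^p(\mu(D)/\mu(B))^{\nu'}$ so that the right-hand side is at most $1/2$; \eqref{lemmaausdoubling} lets us compare $\mu(B(x,t^{1/p}))$ with $\mu(B)$ via the exponent $N$, which fixes $\nu'$. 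At this time the decay forced by $T$ yields $\lambda t\ge c$, hence $\lambda\ge c\,t^{-1}$, which is \eqref{FKp} with $\nu=1/N$-type exponent.
\end{enumerate}
One must also account for the fact that \eqref{uppergaussint} only bounds $u$ near points $x$; taking the supremum over $x\in D$ suffices.

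\textbf{Main obstacle.} The main obstacle is the mean value inequality in the first direction. The nonlinearity $u^{1/(p-1)}$ inside $\Delta_p$ demands careful choice of the test functions $(u^{1/(p-1)}-k)_+^{\,q}$ and bookkeeping of exponents. Moreover, the resulting $L^\infty$ bound must be linear in $\|u\|_{L^1}$; this linearity relies on the homogeneity of Trudinger's equation, and we would exploit it first by normalizing $\sup u=1$ on the top cylinder.
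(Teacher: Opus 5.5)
The architecture is right, and the converse direction follows the paper's idea, but two steps fail as written.

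\emph{The doubling step does not close.} Your single test function only gives the one-scale relation $\bigl(\mu(B(x,R))/\mu(B(x,R/2))\bigr)^{\nu}\le C\,\mu(B(x,R/2))/\mu(B(x,R/4))$. Writing $d_k=\log\bigl(\mu(B(x,2^{-k}R))/\mu(B(x,2^{-k-1}R))\bigr)$, this reads $\nu d_k\le \log C+d_{k+1}$. Chaining it gives $d_0\le \log C\sum_{k=1}^{m}\nu^{-k}+\nu^{-m}d_m$, whose right-hand side blows up as $m\to\infty$ when $\nu\le 1$. That is the relevant case, e.g. $\nu=p/n<1$ for $n>p$. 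Indeed, for $\nu<1$ the sequence $d_k=\max(n\log 2,\nu^{k}d_0)$ satisfies the relation for arbitrarily large $d_0$. The fix, which is the paper's Proposition \ref{FKimpVD}, is to keep the outer ball $B(x,R)$ fixed and apply \eqref{FKp} in it to $(r-d(x,\cdot))_+$ for every $r\le R$. This gives $\mu(B(x,r))^{1+\nu}\ge c\,(r/R)^{p}\mu(B(x,R))^{\nu}\mu(B(x,r/2))$. Iterating only in $r$, the exponents $(1+\nu)^{-i}$ sum to $1/\nu$, and you obtain $\mu(B(x,r))\ge c\,(r/R)^{p/\nu}\mu(B(x,R))$.

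\emph{The tail sketch for \eqref{uppergaussint}.} The paper does not reprove this part; it quotes Theorem 1.1 of \cite{surig2024sharp}. Your sketch of it is off as stated. Testing with $u^{\sigma-1}e^{\xi}$ and using Young's inequality leads to the condition $\partial_t\xi+C|\nabla\xi|^{p}\le 0$, not one with $|\nabla\xi|^{p/(p-1)}$. That condition is solved by $\xi=+c\,d(x,A)^{p/(p-1)}t^{-1/(p-1)}$ with $c$ small; your $\xi$ has $\partial_t\xi>0$. Moreover, the absorption of the cross term needs $\sigma>1$, so a weighted $L^1$ estimate is not directly available.

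\emph{The converse direction misses the large-volume case.} You restrict to $t\le R^p$. For $x\in B=B(x_0,R)$ and $t\le R^p$ one has $\mu(B(x,t^{1/p}))\le C\mu(B)$. So the right-hand side $C\mu(D)/\mu(B(x,t^{1/p}))$ is at least a fixed multiple of $\mu(D)/\mu(B)$. When $\mu(D)$ is comparable to $\mu(B)$ (e.g. $D=B$), it need not drop below $\frac12$, and you get no bound $\lambda_{1,p}(D)\ge cR^{-p}$. Since \eqref{lemmaausdoubling} only bounds volume growth from above, an extra ingredient is needed: reverse doubling, $\mu(B(x,AR))\ge cA^{N_0}\mu(B(x,R))$. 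This holds because $M$ is connected, complete, non-compact and doubling. The paper uses it with $t=(AR)^p$ and $A$ large to get $e^{-\lambda_{1,p}(D)(AR)^p}\le CA^{-N_0}\le\frac12$ in this case.

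\emph{The time profile is wrong.} It is $T(t)=e^{-\lambda_{1,p}(D)t}$ for every $p>1$, because the equation is $1$-homogeneous: $\Delta_p(T^{1/(p-1)}\phi)=T\,\Delta_p\phi$. The power law you quote belongs to $\partial_t u=\Delta_p u$ and would not give a subsolution of \eqref{evoeq} for large $t$. Once corrected, your step 3 is unaffected. Also, normalizing $\phi$ in $L^\infty$ relies on the boundedness of the eigenfunction (Lemma \ref{boundedeigfun}).
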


Observe that, if $M$ satisfies \eqref{FKp} for some $p=p_0>1$, then \eqref{FKp} is true for all $p>p_0$. Hence, we obtain from Theorem \ref{mainthmint} the following

\begin{corollary}
Assume that \eqref{lemmaausdoubling} holds and that \eqref{uppergaussint} is valid for some $p=p_0>1$. Then \eqref{uppergaussint} holds true for every $p>p_0$.
\end{corollary}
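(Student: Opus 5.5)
The corollary follows by combining the two directions of Theorem \ref{mainthmint} with the monotonicity in $p$ of the relative Faber--Krahn inequality noted just before the statement. I would proceed in three steps.

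\emph{Step 1.} Assume \eqref{lemmaausdoubling} and \eqref{uppergaussint} for $p=p_0$. The implication ``$\Leftarrow$'' of Theorem \ref{mainthmint} at $p_0$ then gives \eqref{FKp} with $p=p_0$, with some constants $c,\nu$.

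\emph{Step 2.} Pass from $p_0$ to any $p>p_0$. Fix a ball $B$ of radius $R$ and a non-negative $w\in W^{1,p}_0(B)$, and set $D=\{w>0\}$ and $v=w^{p/p_0}$. One checks that $v\in W^{1,p_0}_0(B)$ (by approximation with bounded truncations), that $\{v>0\}=D$, and that $|\nabla v|=\frac{p}{p_0}w^{p/p_0-1}|\nabla w|$. Applying \eqref{FKp} at order $p_0$ to $v$ and then H\"older's inequality with exponents $p/p_0$ and $p/(p-p_0)$ gives
\begin{equation*}
\frac{c}{R^{p_0}}\Big(\frac{\mu(B)}{\mu(D)}\Big)^{\nu}\int_B w^{p}\le \Big(\frac{p}{p_0}\Big)^{p_0}\Big(\int_B|\nabla w|^{p}\Big)^{p_0/p}\Big(\int_B w^{p}\Big)^{1-p_0/p}.
\end{equation*}
Dividing by $(\int_B w^p)^{1-p_0/p}$ and raising to the power $p/p_0$ yields
\begin{equation*}
\int_B|\nabla w|^p\ge \frac{c'}{R^{p}}\Big(\frac{\mu(B)}{\mu(D)}\Big)^{\nu p/p_0}\int_B w^p ,
\end{equation*}
which is \eqref{FKp} at order $p$ with exponent $\nu p/p_0$.

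\emph{Step 3.} Apply the implication ``$\Rightarrow$'' of Theorem \ref{mainthmint} at order $p$. It gives \eqref{lemmaausdoubling} together with \eqref{uppergaussint} for this $p$, which proves the corollary.

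The only point needing care is Step 2, namely justifying the chain rule for $w^{p/p_0}$ with $p/p_0>1$ in $W_0^{1,p_0}$. I would handle it by approximating $w$ by non-negative Lipschitz functions with compact support in $B$, and for such functions both the chain rule and the identity of supports are clear. Membership in $W^{1,p_0}$ of the limit then comes from H\"older's inequality on the finite-measure ball, together with lower semicontinuity when passing to the limit.
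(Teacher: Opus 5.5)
Your proposal is correct and follows the paper's own argument. You apply the ``$\Leftarrow$'' direction of Theorem \ref{mainthmint} at $p_0$, pass from (FK$_{p_0}$) to (FK$_{p}$) for $p>p_0$, and then apply the ``$\Rightarrow$'' direction at $p$; the paper only asserts the middle monotonicity step, which your Step 2 justifies in the standard way (substitute $w^{p/p_0}$ and use H\"older, giving exponent $\nu p/p_0$).
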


In Theorem \ref{mainthmFK} we prove the implication $\eqref{FKp}\Leftarrow \eqref{lemmaausdoubling}+\eqref{uppergaussint}.$

In Proposition \ref{FKimpVD} we prove that $\eqref{FKp}\Rightarrow \eqref{lemmaausdoubling}$.
The implication $\eqref{FKp}\Rightarrow\eqref{uppergaussint}$ was proved in Theorem 1.1 in \cite{surig2024sharp}. Hence, combining these two results we obtain the implication $\eqref{FKp}\Rightarrow \eqref{lemmaausdoubling}+\eqref{uppergaussint}$.

The main technical difficulties of proving the main result of the present paper Theorem \ref{mainthmFK} arise because of the non-linearity of (\ref{evoeq}). In particular, in contrast to the linear case, no spectral expansion in eigenfunctions is available. However, due to the homogeneity of (\ref{evoeq}) it is still possible to apply the upper estimate (\ref{uppergaussint}) to the function $v(x, t)=e^{-\lambda_{1, p}(D)t}\varphi(x)^{p-1}$, where $D$ is a precompact open set in $M$ and $\varphi$ is the eigenfunction corresponding to the \textit{first eigenvalue} $\lambda_{1, p}(D)$ of the $p$-Laplacian $\Delta _{p}$, that is, $-\Delta _{p}\varphi=\lambda_{1, p}(D)\varphi^{p-1}$ in $D$ (cf. Lemma \ref{lemmaeigen}).
For related properties of the first eigenvalue in the Euclidean setting we refer to \cite{lindgren2022comparison, lindqvist1990equation, lindqvist2008nonlinear}.

To state the second main result of the present paper, we need the following

\begin{definition}
We say that $M$ admits a uniform $p$-Faber-Krahn inequality if there exists a positive (non-increasing) function $\Lambda_{p}$ on $\mathbb{R}_{+}$ such that, for all precompact open sets $\Omega\subset M$ and any non-negative $w\in W_0^{1, p}(\Omega)$,  \begin{equation}\label{uFKp}\int_{\Omega}{|\nabla w|^{p}d\mu}\geq \Lambda_{p}(\mu(\Omega))\int_{\Omega}w^{p}d\mu.\end{equation}
\end{definition} 

The second main result of the present paper is the following (cf. Theorem \ref{uniFKthm}).

\begin{theorem}\label{ThmSobint}
	Assume that $n>p$ and suppose that $M$ admits a uniform $p$-Faber-Krahn inequality with Faber-Krahn function \begin{equation}\label{lowerFKcondint}\Lambda_{p}(v)\geq cv^{-p/n}.\end{equation}
	Let $u$ be a non-negative bounded solution of (\ref{evoeq}) in $M\times \mathbb{R}_{+}$ with $u(\cdot, 0)=u_{0}\in L^{1}(M)\cap L^{\infty}(M)$. Fix some $\sigma\geq \frac{p}{p-1}$. Then, for all $x\in M$ and all $t>0$, \begin{equation}\label{Linfupperint}||u(\cdot,t)||_{L^{\infty}\left(B(x,\frac{1}{2}t^{1/p})\right)}\leq \frac{C||u_{0}||_{L^{1}(M)}}{t^{n/(p\sigma)}\gamma(c\sigma^{-(p-1)}t)^{(\sigma-1)/\sigma}}\exp\left(-c\left(\frac{d(x,A)}{t^{1/p}}\right)^{\frac{p}{p-1}}\right),\end{equation} where the constants $c, C$ depend on $p, n$, and $\gamma$ is defined by \begin{equation}\label{gammaint}t=\int_{0}^{\gamma(t)}\frac{dv}{\Lambda_{p}(v)v}.\end{equation}
\end{theorem}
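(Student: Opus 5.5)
The estimate \eqref{Linfupperint} has two factors: an on-diagonal decay in $t$ and a Gaussian-type tail in $d(x,A)$. I would produce them separately and then glue them together with a mean value inequality. The model is the argument behind the implication \eqref{FKp}$\Rightarrow$\eqref{uppergaussint} in \cite{surig2024sharp}. There the relative Faber--Krahn inequality in balls gives an $L^{1}$--$L^{\infty}$ mean value inequality for subsolutions, and this is combined with an $L^{1}$ tail estimate outside enlarged neighbourhoods of $A$. Here the relative inequality on balls is replaced by the uniform inequality \eqref{uFKp} with $\Lambda_p(v)\ge cv^{-p/n}$. That lower bound is a Euclidean-type Faber--Krahn inequality, so every ball behaves like $\mathbb{R}^n$ from the point of view of the mean value estimates. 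In the final formula $t^{n/p}$ would then play the role of $\mu(B(x,t^{1/p}))$, and the function $\gamma$ carries the refined long-time information.

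\textbf{Step 1 (global $L^\infty$ bound through $\gamma$).} I would test the equation against powers $u^{\sigma-1}$, or truncations $(u-k)_+^{\sigma-1}$. Because the equation is homogeneous of degree one in $u$, this yields a differential inequality for $J(t)=\int_M u^{\sigma}\,d\mu$ of the form
\[
-\frac{d}{dt}J\ \ge\ c\,\sigma^{-(p-1)}\int_M\bigl|\nabla u^{\sigma/p}\bigr|^{p}\,d\mu ,
\]
with the constant coming from rewriting the Dirichlet form. Applying \eqref{uFKp} on the level sets $\{u>k\}$, whose measure is controlled by Chebyshev through $\|u(\cdot,t)\|_{L^1}\le\|u_0\|_{L^1}$, gives an ODE comparison in the style of Grigor'yan's argument for the heat kernel. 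Its solution is governed by the integral \eqref{gammaint}, and produces
\[
\|u(t)\|_{L^{\sigma}}^{\sigma}\le C\,\|u_0\|_{L^1}^{\sigma}\,\gamma\bigl(c\sigma^{-(p-1)}t\bigr)^{-(\sigma-1)}.
\]
This explains both the exponent $(\sigma-1)/\sigma$ and the time rescaling $c\sigma^{-(p-1)}t$. The restriction $\sigma\ge p/(p-1)$ should be exactly what is needed for $u^{\sigma-1}$ to be an admissible test function and for the gradient term to be expressible as $|\nabla u^{\sigma/p}|^p$.

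\textbf{Step 2 (localisation and mean value).} Next I would apply the local mean value inequality for subsolutions on the cylinder $B(x,t^{1/p})\times[t/2,t]$. Its constants come from \eqref{uFKp} with the power bound \eqref{lowerFKcondint}, hence depend only on $p$ and $n$. It bounds $\|u(\cdot,t)\|_{L^\infty(B(x,\frac12 t^{1/p}))}$ by $\bigl(t^{-n/p}\int\!\!\int u^{\sigma}\bigr)^{1/\sigma}$ over the cylinder, up to constants. Splitting $u^{\sigma}\le \|u\|_{\infty,\mathrm{loc}}^{\sigma-1}u$ is the wrong direction here. Instead I would interpolate the bound of Step 1 against the $L^1$ tail, so that one factor of $t^{-n/(p\sigma)}$ comes from the volume normalisation and the factor $\gamma^{-(\sigma-1)/\sigma}$ comes from Step 1.

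\textbf{Step 3 (exponential factor).} The tail $\exp(-c(d/t^{1/p})^{p/(p-1)})$ would come from the $L^1$ estimate outside $A_r=\{d(\cdot,A)<r\}$. One proves $\int_{M\setminus A_r}u(\cdot,t)\,d\mu\le C\|u_0\|_{L^1}\exp(-c(r/t^{1/p})^{p/(p-1)})$ by iterating over annuli with cutoff functions, as in \cite{surig2024sharp}. This is then inserted into the cylinder integral of Step 2 with $r\simeq d(x,A)$.

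\textbf{Main obstacle.} The combination in Step 2 is where I expect the difficulty. The mean value inequality must be run with a precise $\sigma$-dependence so that the constants do not blow up, and the global bound involving $\gamma$ has to be merged with the local exponential tail while keeping exactly the factor $t^{-n/(p\sigma)}\gamma^{-(\sigma-1)/\sigma}$. My first attempt would be a Hölder split of the form
\[
\int u^{\sigma}\le\Bigl(\int u^{\sigma'}\Bigr)^{\theta}\Bigl(\int_{\mathrm{tail}}u\Bigr)^{1-\theta},
\]
choosing $\sigma'$ and $\theta$ to fit the exponents, and absorbing the loss in the constant $c$ of the exponential.
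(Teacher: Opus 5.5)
Your Step 1 is the paper's Lemma \ref{thmfinexinf}: the Caccioppoli inequality \eqref{vetacor}, Faber--Krahn on a superlevel set of $u^{\sigma/p}$, and integration of the resulting ODE, which gives \eqref{condforgamma}. Bounding $\mu\{u^{\sigma/p}>s\}$ by Chebyshev with $\|u\|_{L^1}$, rather than through $\int u^{\sigma(p-1)/p}$ and H\"older, is a harmless variant. Together with a mean value inequality this yields the on-diagonal factor $t^{-n/(p\sigma)}\gamma^{-(\sigma-1)/\sigma}$.

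The gap is Step 3. The bound
\[
\int_{M\setminus A_r}u(\cdot,t)\,d\mu\le C\|u_0\|_{L^1(M)}\exp\Bigl(-c\bigl(r/t^{1/p}\bigr)^{\frac{p}{p-1}}\Bigr)
\]
is false under the hypotheses of the theorem. Take $p=2$, $n\ge 3$, $M=\mathbb{H}^n$. This space satisfies $\Lambda_2(v)\ge cv^{-2/n}$ and is exactly the case $\alpha=0$ of Example \ref{ex2}, which the theorem is meant to cover. For $u(\cdot,t)=P_t1_{B(o,1)}$ the mass is conserved but moves outward at linear radial speed: the diffusion generated by $\Delta$ satisfies $d(o,X_t)/t\to n-1$. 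With $r=(n-1)t/2$ the left-hand side therefore tends to $\|u_0\|_{L^1}$, while the right-hand side is $O(e^{-ct})$. An $L^1$ tail bound of this kind comes from integrating a sub-Gaussian pointwise bound against a doubling volume function, as under \eqref{FKp}. A uniform Faber--Krahn inequality gives no control of volume growth, so your H\"older split has no valid input. The exponents in that split are fine: with $\sigma'=2\sigma$ and $\theta=\frac{\sigma-1}{2\sigma-1}$ you get exactly $\|u_0\|_{L^1}^{\sigma}\gamma^{-(\sigma-1)}$. The failure is entirely in the tail.

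What is missing is a tail estimate at the $L^\sigma$ level that already carries the $\gamma$-decay, i.e.\ a bound of the form $\int_{M\setminus A_r}u^\sigma(\cdot,s)\,d\mu\le C\|u_0\|_{L^1}^\sigma\gamma(c\sigma^{-(p-1)}s)^{-(\sigma-1)}\exp\bigl(-c(r/s^{1/p})^{p/(p-1)}\bigr)$. You cannot get this by multiplying an unweighted decay by an unweighted $L^1$ tail. One natural route is to run the Faber--Krahn decay argument for a Gaussian-weighted norm $\int u^\sigma e^{\xi}$, as the nonlinear analogue of the weighted $L^2$ estimate obtained from the integrated maximum principle in \cite{grigor1994heat}. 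The paper does not redo this. It inserts the global bound \eqref{condforgamma} into Lemma 5.1 of \cite{surig2024sharp}, which converts a global $L^\sigma$ decay into the localized bound \eqref{uxTupper} including the exponential factor. It then uses Remark \ref{remuniformSb} ($S_B$ bounded, $\nu=p/n$) to replace $S_B^{1/(\nu\sigma)}t^{-1/(\nu\sigma)}$ by $Ct^{-n/(p\sigma)}$.
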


Under the hypothesis of Theorem \ref{ThmSobint} it was proved in Theorem 1.2 in \cite{surig2024sharp} that \begin{equation}\label{simplees}||u(\cdot,t)||_{L^{\infty}\left(B(x,\frac{1}{2}t^{1/p})\right)}\leq \frac{C||u_{0}||_{L^{1}(M)}}{t^{n/p}}\exp\left(-c\left(\frac{d(x,A)}{t^{1/p}}\right)^{\frac{p}{p-1}}\right),\end{equation} which also follows from (\ref{Linfupperint}) and (\ref{lowerFKcondint}). When $M=\mathbb{R}^{n}$ we have $\Lambda_p(v)=cv^{-p/n}$.

Let us discuss examples of manifolds where Theorem \ref{ThmSobint} gives a better estimate than (\ref{simplees}) (cf. Example \ref{ex2}). 
Suppose that, for all large $v$, \begin{equation*}\label{lowerfk2int}\Lambda_p(v)=c (\log v)^{-\alpha},
\end{equation*} where $\alpha\geq0$. When $M$ is the \textit{hyperbolic space} $\mathbb{H}^{n}$ we have $\alpha=0$. For $\alpha>0$ see \cite{coulhon1993isoperimetrie}. Then, for large $t$,
$$||u(\cdot,t)||_{L^{\infty}\left(B(x,\frac{1}{2}t^{1/p})\right)}\leq \frac{C||u_{0}||_{L^{1}(M)}}{t^{n/(p\sigma)}\exp(c_\sigma t^{1/(\alpha+1)})}\exp\left(-c\left(\frac{d(x,A)}{t^{1/p}}\right)^{\frac{p}{p-1}}\right).$$

In the linear case $p=2$, a similar result to Theorem \ref{ThmSobint} was proved in \cite{grigor1994heat}.

In \cite{surig2026long} it was proved that the manifold satisfying the assumptions of Theorem \ref{ThmSobint}, that is, $M$ admits the uniform Faber-Krahn inequality (\ref{uFKp}) with $\Lambda_p(v)$ such that (\ref{lowerFKcondint}) holds, is equivalent to the upper estimate $$||u(\cdot,t)||_{L^{\infty}\left(M\right)}\leq \frac{C||u_{0}||_{L^{1}(M)}}{t^{n/p}}$$ for non-negative bounded solutions of (\ref{evoeq}) with initial function $u_{0}\in L^{1}(M)\cap L^{\infty}(M)$.

Recently, it was proved in \cite{elenius2026doubling} that on \textit{metric measure spaces} a $p$-\textit{Poincaré-inequality} and the volume doubling property (\ref{lemmaausdoubling}) are equivalent to a \textit{parabolic Harnack inequality} for solutions of (\ref{evoeq}).

We understand solutions in a certain weak sense (see Section \ref{secweak} for the definition). For existence results for solutions of (\ref{evoeq}) on Riemannian manifolds, we refer to \cite{surig2026existence}.

Let us describe the structure of the present paper.

In Section \ref{secweak}, we
define the notion of a weak solution of the Trudinger equation (\ref{evoeq}).

In Section \ref{SecRFK} we prove in Theorem \ref{mainthmFK} the implication $\eqref{FKp}\Leftarrow \eqref{lemmaausdoubling}+\eqref{uppergaussint}$.

In Section \ref{Secini} we prove in Theorem \ref{uniFKthm} the upper estimate (\ref{Linfupperint}) assuming a uniform $p$-Faber-Krahn inequality (\ref{uFKp}) satisfying (\ref{lowerFKcondint}). In Proposition \ref{convuFK} we also prove in some sense a converse statement to Theorem \ref{uniFKthm}. We prove an uniform Faber-Krahn inequality under the assumption that an upper estimate as in (\ref{Linfupperint}) holds, where $\gamma$ and $\Lambda_p$ are related by (\ref{gammaint}).

We denote by $c, C$ positive constants whose value might change at each occurrence. For functions $f$ and $g$ we also use the notation $f\simeq g$ if there exists a positive constant $C$ such that $C^{-1}g\leq f\leq Cg$.

\begin{acknowledgement}
The author would like to thank Alexander Grigor'yan for many helpful suggestions.
\end{acknowledgement}

	\section{Weak subsolutions}
	
	\label{secweak}
	
	We consider in what follows the following non-linear evolution
	equation on a Riemannian manifold $M$:%
	\begin{equation}
		\partial _{t}u=\Delta _{p}u^{\frac{1}{p-1}}.  \label{dtv}
	\end{equation} 
	By a \textit{subsolution} of (\ref{dtv}) we mean a non-negative function $u$
	satisfying $$\partial _{t}u\leq\Delta _{p}u^{\frac{1}{p-1}}$$
	in a certain weak sense as explained below.
	
	We assume throughout that 
	\begin{equation*}
		p>1.
	\end{equation*}%

Let $\mu $ denote the Riemannian measure on $M$. For simplicity of notation,
we frequently omit in integrations the notation of measure. All integration
in $M$ is done with respect to $d\mu $, and in $M\times \mathbb{R}$ -- with
respect to $d\mu dt$, unless otherwise specified.

Let $\Omega$ be an open set in $M$ and $I$ be an interval in $[0, \infty)$.
	
	\begin{definition}
		\normalfont
		We say that a non-negative function $u=u(x, t)$ is a \textit{weak
			subsolution} of (\ref{dtv}) in $\Omega\times I$, if 
		\begin{equation}  \label{defvonsoluq}
			u\in C\left(I; L^{\frac{p}{p-1}}(\Omega)\right)\quad \textnormal{and}\quad 
			u^{\frac{1}{p-1}}\in L_{loc}^{ p}\left(I; W^{1, p}(\Omega)\right),
		\end{equation} where (\ref{dtv}) holds weakly in $\Omega\times I$, which means that for all $t_{1}, t_{2}\in I$ with $t_{1}<t_{2}$, and all non-negative \textit{test functions} 
		\begin{equation}  \label{defvontestsoluq}
			\psi\in W_{loc}^{1, p}\left(I;
			L^{p}(\Omega)\right)\cap L_{loc}^{p}\left(I; W_{0}^{1,
				p}(\Omega)\right),
		\end{equation}
		we have 
		\begin{equation}  \label{defvonweaksolq}
			\left[\int_{\Omega}{u\psi }\right]_{t_{1}}^{t_{2}}+\int_{t_{1}}^{t_{2}}{%
				\int_{\Omega}{-u\partial_{t}\psi+|\nabla u^{\frac{1}{p-1}}|^{p-2}\langle\nabla u^{\frac{1}{p-1}},
					\nabla \psi\rangle}}\leq 0.
		\end{equation}
	\end{definition}
	
	\textit{Weak supersolutions} and \textit{weak solutions} of (\ref{dtv}) are
	defined analogously.
	
	Existence results for weak solutions of (\ref{dtv}) were obtained in \cite{bogelein2018doubly, coulhon2016regularisation,  ishige1996existence, ivanov1997regularity, lindqvist2026lipschitz} in the Euclidean setting and in \cite{surig2026existence} on manifolds.
	
		The next two lemmas can be proved similarly to Lemma 2.6 in \cite{grigor2024finite}.
	\begin{lemma}[Caccioppoli-type inequality]
		\label{Lem1} Let $I$ be an interval in $\mathbb{R}_{+}=[0, \infty)$ and let $u=u\left( x,t\right) $ be a bounded
		non-negative subsolution to \emph{(\ref{dtv})} in $M\times I$. Fix some real $\sigma $
		such that 
		$\sigma \geq \frac{p}{p-1}$. 
		Choose $t_{1},t_{2}\in I$ such that $t_{1}<t_{2}$. Then%
		\begin{equation}
			\left[ \int_{M }u^{\sigma } \right] _{t_{1}}^{t_{2}}+c_{1}\left( \sigma  -1\right)\sigma ^{1-p}
			\int_{M\times [t_{1}, t_{2}]}\left\vert \nabla \left( u^{\sigma/p } \right) \right\vert
			^{p} \leq 0,
			\label{vetacor}
		\end{equation}
		where $c_{1}$ is a constant depending on $p$. 
	\end{lemma}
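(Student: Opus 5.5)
The plan is to test the weak inequality (\ref{defvonweaksolq}) with $\psi = u^{\sigma-1}$, suitably regularized. Formally, since $u$ is a subsolution,
$$\frac{d}{dt}\int_M u^\sigma = \sigma\int_M u^{\sigma-1}\partial_t u \le -\sigma\int_M |\nabla u^{\frac1{p-1}}|^{p-2}\langle \nabla u^{\frac1{p-1}},\nabla u^{\sigma-1}\rangle .$$
Write $w=u^{\frac1{p-1}}$, so that $u^{\sigma-1}=w^{(p-1)(\sigma-1)}$ and
$$\nabla u^{\sigma-1}=(p-1)(\sigma-1)\,w^{(p-1)(\sigma-1)-1}\nabla w .$$
The integrand then becomes $(p-1)(\sigma-1)\,w^{(p-1)(\sigma-1)-1}|\nabla w|^p$. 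Next express this through $u^{\sigma/p}=w^{(p-1)\sigma/p}$:
$$|\nabla u^{\sigma/p}|^p=\Big(\tfrac{(p-1)\sigma}{p}\Big)^p w^{(p-1)\sigma-p}|\nabla w|^p .$$
The exponents agree, since $(p-1)(\sigma-1)-1=(p-1)\sigma-p$. Hence
$$\frac{d}{dt}\int_M u^\sigma+\sigma(p-1)(\sigma-1)\Big(\tfrac{p}{(p-1)\sigma}\Big)^p\int_M|\nabla u^{\sigma/p}|^p\le0 .$$
This gives (\ref{vetacor}) with coefficient $c_1(\sigma-1)\sigma^{1-p}$, where $c_1=p^p(p-1)^{1-p}$. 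Integrating from $t_1$ to $t_2$ finishes the formal argument. The condition $\sigma\ge \frac p{p-1}$ gives $(p-1)\sigma-p\ge0$, so the weight $w^{(p-1)\sigma-p}$ is bounded because $u$ is bounded. Consequently $\psi=u^{\sigma-1}=w^{(p-1)(\sigma-1)}$ is a Lipschitz function of $w$ on the range of $w$ (its exponent is $\ge1$), so $\psi(\cdot,t)\in W^{1,p}$ locally.

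To make this rigorous, I would follow the scheme of Lemma 2.6 in \cite{grigor2024finite}, handling two issues.

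\textbf{Spatial cutoff.} The test function must lie in $W^{1,p}_0(\Omega)$, but $M$ is non-compact. I would multiply $\psi$ by a cutoff $\eta_R$ equal to $1$ on $B(o,R)$ and supported in $B(o,2R)$, with $|\nabla\eta_R|\le 1/R$. The resulting extra term is bounded by
$$\int |\nabla w|^{p-1}\,w^{(p-1)(\sigma-1)}\,|\nabla \eta_R| .$$
By Hölder's inequality this is controlled by $R^{-1}$ times global integrals that are finite, using $w\in L^p_{loc}(I;W^{1,p}(M))$ and the boundedness of $u$. Letting $R\to\infty$ removes the term. If finiteness of $\int u^\sigma$ is an issue, the inequality is still understood with both sides possibly infinite. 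In the intended applications $u_0\in L^1\cap L^\infty$, so $u(\cdot,t)\in L^{p/(p-1)}$ and boundedness gives $u^\sigma\in L^1$ since $\sigma\ge p/(p-1)$.

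\textbf{Time regularity (main obstacle).} $u^{\sigma-1}$ need not lie in $W^{1,p}_{loc}(I;L^p)$, because $\partial_t u$ is only a distribution. I would use Steklov averages $u_h(x,t)=\frac1h\int_t^{t+h}u(x,s)\,ds$, for which the weak formulation holds pointwise in $t$ with $\partial_t u_h$ a genuine function. Then test with $\psi=\eta_R\,(u_h)^{\sigma-1}$. Convexity of $s\mapsto s^\sigma$ gives
$$\partial_t u_h\,(u_h)^{\sigma-1}=\tfrac1\sigma\partial_t (u_h)^\sigma ,$$
which handles the time term exactly. It remains to pass to the limit $h\to0$ in the gradient term, using $u\in C(I;L^{p/(p-1)})$, the bound $u\le \|u\|_\infty$, and $L^p$-convergence of Steklov averages of $w$ and $\nabla w$. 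The delicate point is that the nonlinear flux $|\nabla w|^{p-2}\nabla w$ must be paired with $\nabla (u_h)^{\sigma-1}$, a gradient of the averaged $u$ rather than of $w$. I would handle this by dominated convergence on the mollified expression, since all factors are bounded or lie in $L^p$ and $L^{p/(p-1)}$.
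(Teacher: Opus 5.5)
Your formal computation is the intended argument; the paper itself only refers to Lemma 2.6 of \cite{grigor2024finite}. The exponent bookkeeping, the constant $c_1=p^p(p-1)^{1-p}$ and the cut-off estimate are all correct. The gap is in the time regularization: for $1<p<2$ the test function $\eta_R\,(u_h)^{\sigma-1}$ is in general not admissible.

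The only spatial regularity you have is $w=u^{1/(p-1)}\in L^p_{loc}(I;W^{1,p})$. A power $w^{\alpha}$ of the bounded function $w$ inherits this for $\alpha\ge 1$, but not in general for $\alpha<1$. For $1<p<2$, $u=w^{p-1}$ is merely a H\"older function of $w$, so $u(\cdot,s)$ need not even lie in $W^{1,1}_{loc}$. The formal gradient $(p-1)w^{p-2}\nabla w$ can fail to be integrable near $\{w=0\}$, e.g.\ for a bounded Lipschitz $w$ with many tiny oscillations near level $0$.

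Since powers do not commute with time averages, $(u_h)^{\sigma-1}$ is not a Lipschitz function of any $W^{1,p}$ function. Suppose that on part of $[t,t+h]$ the profile $w(\cdot,s)$ is of this type, while on another part $w\ge 1$. Then $u_h$ is bounded below by a positive constant, and $\nabla (u_h)^{\sigma-1}=(\sigma-1)u_h^{\sigma-2}\nabla u_h$ need not be in $L^1_{loc}$. So the gradient term of your Steklov-averaged inequality is not even defined, and your claim that all factors are bounded or lie in $L^p$ fails exactly for this factor. For $p\ge 2$ your choice can be rescued: $(u_h)^{1/(p-1)}$ is then an $L^{p-1}$-mean in time of $w$ and so inherits $W^{1,p}$. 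But this has to be argued.

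The standard remedy is to average the test function rather than $u$. Let $\beta=u^{\sigma-1}=w^{(p-1)(\sigma-1)}$. Since $(p-1)(\sigma-1)\ge 1$ (this is exactly where $\sigma\ge \frac{p}{p-1}$ is used), $\beta\in L^p_{loc}(I;W^{1,p})$. Then $\psi=\eta_R\,\beta_h$, with the forward Steklov average $\beta_h(t)=\frac1h\int_t^{t+h}\beta(s)\,ds$, is admissible in \eqref{defvonweaksolq}.

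The time term becomes $-\frac1h\int_{t_1}^{t_2}\int_M \eta_R\,u(t)\bigl(\beta(t+h)-\beta(t)\bigr)$. This is where convexity is genuinely needed; your identity for $u_h$ is just the chain rule. The function $H(s)=\frac{\sigma-1}{\sigma}s^{\sigma/(\sigma-1)}$ is convex with $H'(\beta(t))=u(t)$, hence $u(t)\bigl(\beta(t+h)-\beta(t)\bigr)\le H(\beta(t+h))-H(\beta(t))$. So the time term is bounded below by $-\bigl[\int_M\eta_R\,(H\circ\beta)_h\bigr]_{t_1}^{t_2}$.

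Now let $h\to 0$, using $u\in C(I;L^{p/(p-1)})$, the boundedness of $u$, and $\nabla\beta_h=(\nabla\beta)_h\to\nabla\beta$ in $L^p$. This gives $\frac1\sigma\bigl[\int_M\eta_R\,u^\sigma\bigr]_{t_1}^{t_2}+\int_{t_1}^{t_2}\int_M|\nabla w|^{p-2}\langle\nabla w,\nabla(\eta_R\beta)\rangle\le 0$. Your pointwise computation and your cut-off argument then finish the proof. The forward average is essential: with a backward average the convexity inequality points the wrong way for a subsolution.
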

	
	Let us observe for a later usage that \begin{equation}\label{valpha}u^{\sigma/p}\in L_{loc}^{p}\left(I; W^{1, p}(M)\right).\end{equation}
	Indeed, using $\sigma/p\geq \frac{1}{p-1}$, we get that the function $\Phi(s)=s^{\frac{\sigma(p-1)}{p}}$ is Lipschitz on any bounded interval in $[0, \infty)$. Thus, $u^{\sigma/p}=\Phi(u^{\frac{1}{p-1}})\in W^{1, p}(M)$ and $$\left|\nabla u^{\sigma/p}\right|=\left|\Phi^{\prime}(u^{\frac{1}{p-1}})\nabla u^{\frac{1}{p-1}}\right|\leq C\left|\nabla u^{\frac{1}{p-1}}\right|,$$ whence for any bounded interval $J\subset I$, \begin{equation*}\label{valpha0}\int_{M\times J}u^{\sigma}+\left\vert \nabla \left( u^{\sigma/p } \right) \right\vert^{p}\leq  C^{\prime}\int_{M\times J}u^{\sigma}+\left\vert \nabla  u^{1/(p-1) } \right\vert^{p},\end{equation*} which is finite since $$\int_{M\times J}u^{\sigma}\leq \textnormal{const}~||u||_{L^{\infty}}^{\sigma-\frac{p}{p-1}}\int_{M\times J}u^{\frac{p}{p-1}}$$ and proves (\ref{valpha}).
		
	\begin{lemma}[Lemma 2.9 \cite{Grigoryan2024}]
		\label{monl1}
		Let $u=u\left( x,t\right) $ be a non-negative bounded solution to \emph{(\ref{dtv})} in $M\times I$. If $\sigma\geq 1$, including $\sigma=\infty$, then the function 
		\begin{equation*}
			t\mapsto \left\Vert u(\cdot ,t)\right\Vert _{L^{\sigma}(M)}
		\end{equation*}%
		is monotone decreasing in $I$.
	\end{lemma}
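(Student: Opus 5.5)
For $\sigma\ge \frac{p}{p-1}$ the claim follows directly from Lemma \ref{Lem1}. Since $u$ is bounded and $u(\cdot,t)\in L^{\frac{p}{p-1}}(M)$, we have $u(\cdot,t)\in L^{\sigma}(M)$ for every $t\in I$. Inequality (\ref{vetacor}) gives $\left[\int_M u^\sigma\right]_{t_1}^{t_2}\le 0$ for all $t_1<t_2$ in $I$, because the gradient term is non-negative. This case uses only that $u$ is a subsolution. The case $\sigma=\infty$ then follows by a limit argument: for fixed $t_1<t_2$ and every finite $\sigma\ge\frac{p}{p-1}$ we have $\|u(\cdot,t_2)\|_{L^\sigma}\le \|u(\cdot,t_1)\|_{L^\sigma}$. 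Letting $\sigma\to\infty$ and using $\|f\|_{L^\sigma}\to\|f\|_{L^\infty}$ for $f\in L^{q}\cap L^\infty$ gives the $L^\infty$ monotonicity.

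The remaining range $1\le \sigma<\frac{p}{p-1}$ needs a separate argument. Here $u^{\sigma-1}$ is not a legitimate test function near $\{u=0\}$, and $M$ is non-compact, so both a truncation and a spatial cutoff are needed. We may assume $\int_M u^\sigma(\cdot,t_1)<\infty$, since otherwise there is nothing to prove. For $\varepsilon>0$ set
\[
\phi_\varepsilon(s)=(s+\varepsilon)^{\sigma-1}-\varepsilon^{\sigma-1}, \qquad \Phi_\varepsilon(s)=\int_0^s\phi_\varepsilon(r)\,dr ,
\]
so that $\phi_\varepsilon$ is non-decreasing, Lipschitz on bounded sets, and vanishes at $0$. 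Let $\eta_R$ be a Lipschitz cutoff, equal to $1$ on $B(o,R)$, supported in $B(o,2R)$, with $|\nabla\eta_R|\le 1/R$.

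We test with $\psi=\phi_\varepsilon(u)\,\eta_R$. The time derivative is handled by Steklov averages in the standard way, giving the chain rule $\left[\int\Phi_\varepsilon(u)\eta_R\right]_{t_1}^{t_2}$. This is the one place where we use that $u$ is a solution rather than only a subsolution, since $\phi_\varepsilon(u)$ depends on $u$ and a two-sided identity is needed to pass through the averaging. Admissibility of $\psi$ in $L^p_{loc}(I;W^{1,p}_0)$ follows from the regularity of $u^{1/(p-1)}$ and the boundedness of $u$, as in the computation after (\ref{valpha}). Writing $v=u^{1/(p-1)}$, the spatial term splits into two parts:
\[
\int |\nabla v|^{p-2}\langle\nabla v,\nabla\phi_\varepsilon(u)\rangle\,\eta_R\ \ge 0,
\]
because $\phi_\varepsilon(u)=\phi_\varepsilon(v^{p-1})$ is a non-decreasing function of $v$, and an error term
\[
\int |\nabla v|^{p-1}\,\phi_\varepsilon(u)\,|\nabla\eta_R| .
\]

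The main obstacle is the error term. I would bound it by Hölder's inequality,
\[
\int |\nabla v|^{p-1}\phi_\varepsilon(u)|\nabla\eta_R|\le \frac{1}{R}\Big(\int_{M\times[t_1,t_2]}|\nabla v|^p\Big)^{\frac{p-1}{p}}\Big(\int_{M\times[t_1,t_2]}\phi_\varepsilon(u)^p\Big)^{\frac1p}.
\]
The first factor is finite because $v\in L^p(I;W^{1,p})$. For the second factor, $\phi_\varepsilon(u)\le C_\varepsilon u$ with $C_\varepsilon$ depending on $\varepsilon$, and $u$ is bounded and lies in $L^{\frac{p}{p-1}}$. Hence $u^p\le \|u\|_\infty^{\,p-\frac{p}{p-1}}u^{\frac{p}{p-1}}$ when $p\ge \frac{p}{p-1}$, while for $p<2$ one uses instead $u\in L^{p/(p-1)}$ together with the finiteness of the relevant integrals. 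In either case the error term is $O_\varepsilon(1/R)$, so letting $R\to\infty$ with $\varepsilon$ fixed gives
\[
\int\Phi_\varepsilon(u(\cdot,t_2))\le\int\Phi_\varepsilon(u(\cdot,t_1)).
\]
Finally let $\varepsilon\to0$. Then $\Phi_\varepsilon(s)\uparrow s^\sigma/\sigma$ for $\sigma>1$, and monotone convergence yields the claim. For $\sigma=1$ the functional $\Phi_\varepsilon$ degenerates, so I would treat $\sigma=1$ by letting $\sigma\downarrow1$ (Fatou) or by testing directly with $\eta_R$ and using the solution identity.
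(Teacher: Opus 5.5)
\textbf{There is a genuine gap for $1<p<2$.} The paper does not prove this lemma; it imports it from \cite{Grigoryan2024}, so your argument has to be judged on its own. Your first step is correct: the range $\sigma\ge\frac{p}{p-1}$ follows from Lemma~\ref{Lem1}, and $\sigma=\infty$ follows by letting $\sigma\to\infty$. Your second step is also correct when $p\ge 2$. Then $\sigma<\frac{p}{p-1}\le 2$, $\phi_\varepsilon$ is globally Lipschitz, and $u=v^{p-1}$ is a Lipschitz function of $v$ on bounded sets. Moreover $u^p\le C u^{p/(p-1)}$, $\Phi_\varepsilon(s)\le s^\sigma/\sigma$, and $\Phi_\varepsilon\uparrow s^\sigma/\sigma$.

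For $1<p<2$, however, the shifted function $\phi_\varepsilon(s)=(s+\varepsilon)^{\sigma-1}-\varepsilon^{\sigma-1}$ fails in three places. The phrase ``the finiteness of the relevant integrals'' covers none of them.

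\emph{Admissibility.} Since $\phi_\varepsilon'(0)=(\sigma-1)\varepsilon^{\sigma-2}>0$, near $\{v=0\}$ you have $\phi_\varepsilon(u)=\phi_\varepsilon(v^{p-1})\approx c_\varepsilon v^{p-1}$. This is not Lipschitz in $v$ when $p<2$. The computation after (\ref{valpha}) works only because the exponent $\sigma(p-1)/p$ there is $\ge 1$; here the exponent is $p-1<1$. So $\nabla\psi$ contains $\phi_\varepsilon'(u)(p-1)v^{p-2}\nabla v$, which need not lie in $L^p$. Your ``non-negative'' term $\int\phi_\varepsilon'(u)(p-1)v^{p-2}|\nabla v|^p\eta_R$ may even be $+\infty$.

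\emph{The tail.} The bound $\phi_\varepsilon(u)\le C_\varepsilon u$ puts $\phi_\varepsilon(u)$ in $L^p(M\times[t_1,t_2])$ only if $u$ is. But here $p<\frac{p}{p-1}$, and $L^{p/(p-1)}\cap L^\infty\not\subset L^p$ on a non-compact $M$. The extra assumption $\int u^\sigma(\cdot,t_1)<\infty$ concerns only the time $t_1$, and using it at later times is circular.

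\emph{Convergence in $\varepsilon$.} For $p<2$ the range $\sigma<\frac{p}{p-1}$ contains values $\sigma>2$. There $(r+\varepsilon)^{\sigma-1}-\varepsilon^{\sigma-1}$ increases in $\varepsilon$, so $\Phi_\varepsilon\downarrow s^\sigma/\sigma$ rather than $\uparrow$. Also $\Phi_\varepsilon(s)\ge c_\varepsilon s^2$ near $0$. So both sides of $\int\Phi_\varepsilon(u(\cdot,t_2))\le\int\Phi_\varepsilon(u(\cdot,t_1))$ can be $+\infty$ when $u\notin L^2$.

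\textbf{Fix.} All three problems disappear if you truncate instead of shift: take $\phi_\varepsilon(s)=(s^{\sigma-1}-\varepsilon^{\sigma-1})_+$.
\begin{itemize}
\item The function $\phi_\varepsilon(u)=(v^{(p-1)(\sigma-1)}-\varepsilon^{\sigma-1})_+$ vanishes for $v\le\varepsilon^{1/(p-1)}$. Hence it is a non-decreasing Lipschitz function of $v$ on bounded sets, for every $p>1$.
\item You have $0\le\phi_\varepsilon(u)\le\|u\|_\infty^{\sigma-1}1_{\{u>\varepsilon\}}$. By Chebyshev and $u\in C(I;L^{p/(p-1)})$, the measure $\mu\{u(\cdot,t)>\varepsilon\}$ is bounded uniformly for $t\in[t_1,t_2]$.
\item Consequently $\int\Phi_\varepsilon(u(\cdot,t))<\infty$, and your H\"older bound makes the tail term $O_\varepsilon(R^{-1})$.
\item Finally $\Phi_\varepsilon(s)=\int_0^s\phi_\varepsilon\uparrow s^\sigma/\sigma$ for every $\sigma>1$.
\end{itemize}
This single argument handles all $\sigma\in(1,\infty)$ and all $p>1$. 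The cases $\sigma=1$ (Fatou as $\sigma\downarrow 1$) and $\sigma=\infty$ then follow from your limiting arguments.

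One side remark: the time term does not actually need $u$ to be a solution. Use backward Steklov averages and the convexity inequality $\phi_\varepsilon(u(t))\,(u(t)-u(t-h))\ge\Phi_\varepsilon(u(t))-\Phi_\varepsilon(u(t-h))$. Since $\psi\ge 0$, the argument then runs for subsolutions. This is relevant, because Lemma~\ref{thmfinexinf} applies the present lemma to subsolutions.
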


	\section{Relative Faber-Krahn inequality}
	
	\label{SecRFK} 
		
		Using the method from \cite{grigor1994heat}, which uses arguments from \cite{carron1996inegalites}, we obtain the following result. 
		\begin{proposition}\label{FKimpVD}
		Assume that $M$ admits the relative Faber-Krahn inequality (\ref{FKp}). Then $M$ satisfies the volume doubling property (\ref{lemmaausdoubling}).
		\end{proposition}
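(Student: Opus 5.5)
The plan is to test \eqref{FKp} with explicit cut-off functions, which turns the Faber–Krahn inequality into a comparison of volumes of concentric balls. Fix $x\in M$ and $r>0$, and set $B=B(x,r)$. Define the Lipschitz function
\[
w(y)=\left(\tfrac{r}{2}-d(x,y)\right)_+ .
\]
Then $w\in W_0^{1,p}(B)$, $w$ is non-negative, $D=\{w>0\}=B(x,r/2)$, and $|\nabla w|\le 1$. Hence
\[
\int_B|\nabla w|^p\le \mu(B(x,r/2)), \qquad
\int_B w^p\ge (r/4)^p\,\mu(B(x,r/4)).
\]
Inserting these bounds into \eqref{FKp} with $R=r$ gives
\[
\mu(B(x,r/2))\ge c\,4^{-p}\left(\frac{\mu(B(x,r))}{\mu(B(x,r/2))}\right)^{\nu}\mu(B(x,r/4)).
\]
Since $\mu(B(x,r/4))\le \mu(B(x,r/2))$, this alone is not quite a doubling estimate: it bounds $\mu(B(x,r))$ by $\mu(B(x,r/2))^{1+1/\nu}\mu(B(x,r/4))^{-1/\nu}$. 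The ratio $\mu(B(x,r/2))/\mu(B(x,r/4))$ is exactly the quantity we are trying to control, so we cannot close the argument directly.

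To close it, we use the Carron-type iteration from \cite{carron1996inegalites}. The first step is to apply \eqref{FKp} in the ball $B(x,r)$ to the cut-off functions $w_k=(r_k-d(x,\cdot))_+$ with $r_k=r2^{-k}$. This gives, for $V_k=\mu(B(x,r_k))$ and $V_0=\mu(B(x,r))$,
\[
V_{k}\ \ge\ c\,2^{-kp}\,4^{-p}\Big(\frac{V_0}{V_k}\Big)^{\nu}V_{k+1},
\quad\text{i.e.}\quad
V_k^{1+\nu}\ge c\,2^{-kp}V_0^{\nu}V_{k+1}.
\]
Here the smallness factor $2^{-kp}$ comes from $R=r$ while the cut-off has radius $r_k$. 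Iterating this recursion expresses $V_0$ in terms of $V_1$ alone. Writing $V_k=V_0\,a_k$, the recursion reads
\[
a_{k+1}\le C\,2^{kp}a_k^{1+\nu}.
\]

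The second step is to exclude the possibility that $a_1=V_1/V_0$ is small. Suppose $a_1\le\varepsilon$ with $\varepsilon$ small. Then the recursion forces $a_k\to0$ super-exponentially, roughly like $\varepsilon^{(1+\nu)^k}$. On the other hand, on a smooth manifold $V_k\sim\omega_n r_k^n$ as $k\to\infty$, so $a_k$ decays only like $2^{-kn}$. The super-exponential decay contradicts this polynomial behaviour for sufficiently small $\varepsilon$, where the threshold is made uniform in $x$ and $r$ by the normalization.

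I expect this second step to be the main obstacle, because it relies on local Euclidean behaviour, which is not uniform in $x$ and $r$. The cleaner route, and the one I would try first, is Grigor'yan's argument in \cite{grigor1994heat}. That argument chooses the test function adaptively in the ball $B(x,r)$, taking $w=(\rho-d(x,\cdot))_+$ with $\rho$ chosen so that $\mu(B(x,\rho))$ is a fixed fraction of $\mu(B(x,r))$. One then uses the ratio lower bound together with a summation over dyadic scales to show $\mu(B(x,r))\le C\mu(B(x,r/2))$ with $C$ depending only on $c,\nu,p$. Once this doubling at one scale is established, \eqref{lemmaausdoubling} with $N=\log_2 C$ follows by iterating $\mu(B(x,R))\le C^{\lceil\log_2(R/r)\rceil}\mu(B(x,r))$.
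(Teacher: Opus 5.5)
Your Carron iteration (the recursion $a_{k+1}\le C2^{kp}a_k^{1+\nu}$ plus the small-ball contradiction) is correct and is essentially the paper's proof. The step you flag as the main obstacle is not one. Taking logarithms, dividing by $(1+\nu)^{k}$ and summing gives, for every $m\ge 1$,
\[
(1+\nu)^{1-m}\log a_m\le \log a_1+K,\qquad K=\sum_{k\ge 1}(1+\nu)^{-k}\bigl(\log C+kp\log 2\bigr)<\infty ,
\]
where, taking $C\ge 1$, $K$ depends only on $c,p,\nu$.

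For fixed $x$ and $r$ the left-hand side tends to $0$ as $m\to\infty$, because $\mu(B(x,\rho))\ge c_x\rho^n$ for all small $\rho$. The dependence of $c_x$ (and of the range of $\rho$) on $x$ is harmless, since they enter only through this limit. Hence $\log a_1+K\ge 0$, i.e.\ $a_1\ge e^{-K}$ for all $x,r$. The threshold is uniform, and only the pointwise, qualitative asymptotics of small balls are used.

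The paper uses the same ingredient. It applies \eqref{FKp} in $B(x,R)$ to $(r-d(x,\cdot))_+$ for arbitrary $0<r\le R$, iterates $r\mapsto r/2$ with $R$ fixed, and lets $m\to\infty$. That limit needs exactly $\mu(B(x,r2^{-m}))^{(1+\nu)^{-m}}\to 1$. Keeping $R$ free gives $\mu(B(x,r))\ge c\,(r/R)^{p/\nu}\mu(B(x,R))$ directly, i.e.\ \eqref{lemmaausdoubling} with the natural exponent $N=p/\nu$. Your version instead gets a one-scale doubling constant $e^{K}$, which after iteration gives $N=K/\log 2$. Otherwise the two arguments coincide.

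So there is no reason to fall back on the adaptive-$\rho$ argument, which you only sketch and which is not yet a proof. It would not avoid the issue you were worried about either. Your recursion is satisfied, for $k\ge 1$, by super-exponentially decaying sequences such as $a_k=e^{-\lambda^k}$ with $\lambda\ge 1+\nu$. So inequalities obtained by testing \eqref{FKp} with radial cut-offs do not by themselves exclude super-exponentially small balls. Some lower bound on the volume of small balls must come from the manifold structure, exactly as in your first argument.
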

	
		\begin{proof}
		Fix $x\in M$, $r>0$ and let $w(y)=(r-d(x, y))_+$. Then $|\nabla w|\leq 1$ and $w\geq \frac{1}{2}r$ if $d(x, y)<	\frac{1}{2}r$ and thus, for $0<r\leq R$, $$\frac{\dint_{B(x, R)}\left\vert \nabla w\right\vert ^{p}}{\dint_{B(x, R)}w^{p}}\leq  \frac{\mu(B(x, r))}{\dint_{B(x, \frac{1}{2}r)}w^{p}}\leq \frac{2^p\mu(B(x, r))}{r^{p}\mu(B(x, \frac{1}{2}r))}.$$
		Applying the relative Faber-Krahn inequality (\ref{FKp}) we therefore get $$\frac{\mu(B(x, R))^{\nu}}{R^{p}\mu(B(x, r))^{\nu}}\leq \frac{2^p\mu(B(x, r))}{r^{p}\mu(B(x, \frac{1}{2}r))},$$ that is, $$\mu(B(x, r))\geq cr^{\frac{p}{1+\nu}}\mu(B(x, \frac{1}{2}r))^{\frac{1}{1+\nu}}\frac{\mu(B(x, R))^{\frac{1}{1+\nu}}}{R^{p}}.$$ Iterating this inequality we get, for any $m\geq 1$, $$\mu(B(x, r))\geq c(2^p)^{-\sum_{i=1}^{m}(i-1)\left(\frac{1}{1+\nu}\right)^{i}}\left(r^{p}\frac{\mu(B(x, R))^{\nu}}{R^{p}}\right)^{\sum_{i=1}^{m}\left(\frac{1}{1+\nu}\right)^{i}}\mu(B(x, \frac{r}{2^m})).$$ Sending $m\to \infty$ yields $$\mu(B(x, r))\geq c\left(r^{p}\frac{\mu(B(x, R))^{\nu}}{R^{p}}\right)^{1/\nu},$$ which gives $$\frac{\mu(B(x, R))}{\mu(B(x, r))}\leq C\left(\frac{R}{r}\right)^{p/\nu}$$ and proves (\ref{lemmaausdoubling}) with $N=p/\nu$. 
		\end{proof}
	
	\subsection{Properties of the first eigenfunction}
	
	\begin{definition}
		Let $D$ be a precompact open set in $M$ and let $p>1$. Then we define the first eigenvalue $\lambda_{1, p}(D)$ of the $p$-Laplacian $\Delta _{p}$ by \begin{equation}\label{firsteigdef}
			\lambda_{1, p}(D)=\inf_{0\ne w\in W_0^{1, p}(D)}\frac{\dint_{D}\left\vert \nabla w\right\vert ^{p}}{\dint_{D}|w|^{p}},
		\end{equation}
		where the quotient on the right-hand side of (\ref{firsteigdef}) is the \it{Rayleigh quotient}.
	\end{definition}
	
	\begin{lemma}\label{constreig}
		Let $D$ be a precompact open set in $M$. Then  $\lambda_{1, p}(D)>0$ and there exists an eigenfunction $\varphi\in W_0^{1, p}(D)$ corresponding to $\lambda_{1, p}(D)$, that is, \begin{equation}\label{defeigenfunc}
			\dint_{D}\left\vert \nabla \varphi\right\vert ^{p}=	\lambda_{1, p}(D)\dint_{D}|\varphi|^{p}.
		\end{equation}
		Moreover, $\varphi$ can be chosen non-negative and $\varphi$ satisfies $-\Delta _{p}\varphi=\lambda_{1, p}(D)\varphi^{p-1}$ weakly in $D$, that means, for any $\psi\in W_{0}^{1, p}(D)$, \begin{equation}\label{eigenfunctionprop}\int_D|\nabla \varphi|^{p-2}\langle\nabla \varphi,
			\nabla \psi\rangle=\lambda_{1, p}(D) \int_D \varphi^{p-1}\psi.\end{equation}
	\end{lemma}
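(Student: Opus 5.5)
We prove Lemma \ref{constreig} with the direct method of the calculus of variations on the precompact set $D$. The main tool is a Poincaré-type inequality on $D$ together with the compact embedding $W_0^{1,p}(D)\hookrightarrow L^p(D)$.

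\emph{Positivity of $\lambda_{1,p}(D)$.} Since $D$ is precompact, $\overline D$ can be covered by finitely many coordinate charts in which the metric is uniformly comparable to the Euclidean one. Via a partition of unity, the Euclidean Poincaré inequality on bounded sets transfers to
\[
\int_D|w|^p\le C_D\int_D|\nabla w|^p \quad\text{for all } w\in W_0^{1,p}(D).
\]
This gives $\lambda_{1,p}(D)\ge C_D^{-1}>0$. If $D$ lies in a larger precompact ball $B$ that satisfies a Faber--Krahn type bound, one could alternatively compare with $\lambda_{1,p}(B)$ by domain monotonicity. The chart argument is self-contained, so we use it.

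\emph{Existence of a minimizer.} Take a minimizing sequence $w_k\in W_0^{1,p}(D)$ normalized by $\int_D|w_k|^p=1$. Replacing $w_k$ by $|w_k|$ changes neither quotient, since $|\nabla|w||=|\nabla w|$ a.e., so we may assume $w_k\ge0$. The sequence is bounded in $W_0^{1,p}(D)$, which is reflexive because $p>1$. Hence a subsequence converges weakly in $W_0^{1,p}(D)$. By Rellich--Kondrachov, which holds on precompact sets again through finitely many charts, the subsequence converges strongly in $L^p(D)$ to some $\varphi$. Then $\int_D\varphi^p=1$, $\varphi\ge0$, and weak lower semicontinuity of $w\mapsto\int_D|\nabla w|^p$ gives $\int_D|\nabla\varphi|^p\le\lambda_{1,p}(D)$. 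Equality holds by the definition of the infimum, which is (\ref{defeigenfunc}).

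\emph{Euler--Lagrange equation.} Fix $\psi\in W_0^{1,p}(D)$ and set $f(\varepsilon)=\int_D|\nabla(\varphi+\varepsilon\psi)|^p-\lambda_{1,p}(D)\int_D|\varphi+\varepsilon\psi|^p$. By definition of $\lambda_{1,p}(D)$ we have $f\ge0$, and $f(0)=0$, so $\varepsilon=0$ is a minimum. Both integrals are differentiable in $\varepsilon$, because $s\mapsto|s|^p$ is $C^1$ for $p>1$. Dominated convergence is justified by the bound $\big||a+\varepsilon b|^p-|a|^p\big|/|\varepsilon|\le p(|a|+|b|)^{p-1}|b|$ for $|\varepsilon|\le 1$, which is integrable by Hölder's inequality. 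Thus $f'(0)=0$, and using $\varphi\ge0$ this reads
\[
p\int_D|\nabla\varphi|^{p-2}\langle\nabla\varphi,\nabla\psi\rangle-p\lambda_{1,p}(D)\int_D\varphi^{p-1}\psi=0,
\]
which is (\ref{eigenfunctionprop}).

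The only step that needs care is transferring the Euclidean compactness and Poincaré results to a precompact subset of a manifold. This is handled by the finite chart cover and the uniform comparability of metrics on $\overline D$. Everything else is standard.
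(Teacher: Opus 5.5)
There is a genuine gap in your positivity step. The Euclidean Poincaré inequality does not transfer through a partition of unity $\{\chi_i\}$. Applying it to $\chi_i w$ gives $\int|\chi_i w|^p\le C_i\int|\nabla(\chi_i w)|^p$, and the right-hand side contains $\int|w|^p|\nabla\chi_i|^p$. This term cannot be absorbed: $C_i\sim(\operatorname{diam}U_i)^p$ while $|\nabla\chi_i|\gtrsim(\operatorname{diam}U_i)^{-1}$. Summing over $i$ therefore only yields $\int_D|w|^p\le C\int_D|\nabla w|^p+C'\int_D|w|^p$ with $C'$ not small, which says nothing.

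This is not a technicality. Every step of your chart argument applies verbatim to $D=M$ with $M$ compact. There $W_0^{1,p}(M)=W^{1,p}(M)$ contains constants, so $\lambda_{1,p}(M)=0$. Positivity is a global fact and has to use that $M$ is connected and that $M\setminus\overline D$ is a non-empty open set, which holds because $M$ is non-compact. Your argument never uses either.

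The fix uses only what you already have, and it is how the paper argues. Your existence step never needs Poincaré. The normalization $\int_D|w_k|^p=1$ together with $\int_D|\nabla w_k|^p\to\lambda_{1,p}(D)$ already bounds $w_k$ in $W^{1,p}$. With the full $W^{1,p}$ norm under control, the chart transfer of Rellich--Kondrachov is legitimate, because compactness is local in a way Poincaré is not.

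So first produce $\varphi\ge0$ with $\int_D\varphi^p=1$ and $\int_D|\nabla\varphi|^p=\lambda_{1,p}(D)$. Suppose $\lambda_{1,p}(D)=0$. Then $\nabla\varphi=0$ a.e., and the zero extension of $\varphi$ lies in $W^{1,p}(M)$ with vanishing gradient. It is therefore a.e. constant on the connected manifold $M$. Since it vanishes on the non-empty open set $M\setminus\overline D$, it is identically zero, contradicting $\int_D\varphi^p=1$.

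The remaining steps are correct. Your Euler--Lagrange derivation differs from the paper's. The paper invokes the Lagrange multiplier principle for the constraint $\int_D w^p=1$, then identifies the multiplier as $\lambda_{1,p}(D)$ by testing with $\psi=\varphi$. You instead use that $f(\varepsilon)\ge0=f(0)$ and differentiate, with a correct dominated-convergence justification. Your route avoids checking the hypotheses of a multiplier theorem in $W_0^{1,p}(D)$ and produces the right constant directly. Taking absolute values along the minimizing sequence rather than at the end is equivalent to what the paper does.
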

	
	\begin{proof}
		Let us choose a sequence $\varphi_j\in W_0^{1, p}(D)\setminus\{0\}$ such that \begin{equation}\label{miniseq}\frac{\dint_{D}\left\vert \nabla \varphi_j\right\vert ^{p}}{\dint_{D}|\varphi_j|^{p}}\to \lambda_{1, p}(D)\quad \textnormal{as}~j\to \infty.\end{equation} By homogeneity of the Rayleigh quotient, we can choose this sequence so that $\dint_{D}|\varphi_j|^{p}=1$ for every $j$. Clearly, the sequence $\{\varphi_j\}$ is bounded in $W_0^{1, p}(D)$ so that, after passing to a subsequence, we have $$\varphi_j\rightharpoonup \varphi~\textnormal{weakly in}~W^{1, p}(D)$$ for some $\varphi\in W_0^{1, p}(D)$. Since the embedding $W_0^{1, p}(D)\hookrightarrow L^{p}(D)$ is compact, we obtain that $\varphi_j\to \varphi~\textnormal{in}~L^{p}(D)$. Hence, $$\dint_{D}|\varphi|^{p}=\lim_{j\to \infty}\dint_{D}|\varphi_j|^{p}=1$$ and in particular, $\varphi\not\equiv0$. Since the function $w\mapsto \dint_{D}\left\vert \nabla w\right\vert ^{p}$ is weakly lower-semicontinuous in $W_0^{1, p}(D)$ by the convexity of the map $\xi\mapsto |\xi|^{p}$, we get by (\ref{miniseq}), $$\dint_{D}\left\vert \nabla \varphi\right\vert ^{p}\leq \liminf_{j\to \infty} \dint_{D}\left\vert \nabla \varphi_j\right\vert ^{p}=\lambda_{1, p}(D).$$ Moreover, $$\lambda_{1, p}(D)\leq \frac{\dint_{D}\left\vert \nabla \varphi\right\vert ^{p}}{\dint_{D}|\varphi|^{p}}=\dint_{D}\left\vert \nabla \varphi\right\vert ^{p}.$$ Therefore, (\ref{defeigenfunc}) follows. In particular, $\lambda_{1, p}(D)>0$. Indeed, otherwise (\ref{defeigenfunc}) would imply $\dint_{D}\left\vert \nabla \varphi\right\vert ^{p}=0$. Hence, $\varphi$ is constant a.e. on every connected component of $D$. Since $\varphi\in W_0^{1, p}(D)$ this yields $\varphi\equiv 0$ a.e. in $D$ contradicting $\dint_{D}|\varphi|^{p}=1$.
		Since $|\nabla \varphi|=|\nabla |\varphi||$ a.e., $\varphi$ can be chosen to be non-negative.
		
		Note that $\varphi$ minimizes the function $w\mapsto \dint_{D}\left\vert \nabla w\right\vert ^{p}$ under the condition that $\dint_{D}|w|^{p}=1$. Hence, by the Lagrange multiplier principle there exists a real $\Lambda$ so that, for any $\psi\in W_{0}^{1, p}(D)$, \begin{equation}\label{lagrange}p\int_D|\nabla \varphi|^{p-2}\langle\nabla \varphi,
			\nabla \psi\rangle=p\Lambda \int_D \varphi^{p-1}\psi.\end{equation} Taking $\psi=\varphi$, we obtain $$\dint_{D}\left\vert \nabla \varphi\right\vert ^{p}=\Lambda\dint_{D}\varphi^{p}=\Lambda.$$ Thus, $\Lambda=\lambda_{1, p}(D)$ and we conclude (\ref{eigenfunctionprop}) from (\ref{lagrange}).
	\end{proof}
	
	\begin{lemma}\label{boundedeigfun}
		Let $D$ be a precompact open set and $\varphi$ be the eigenfunction corresponding to the first eigenvalue $\lambda_{1, p}(D)$ of the $p$-Laplacian $\Delta _{p}$ constructed in Lemma \ref{constreig}. Then $\varphi\in L^{\infty}(D)$.
	\end{lemma}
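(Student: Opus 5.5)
The plan is to run a De Giorgi/Moser iteration on the eigenvalue equation (\ref{eigenfunctionprop}). The only local ingredient needed is a Sobolev-type inequality on the precompact set $D$, and this we can always obtain. Because $\overline{D}$ is compact, it is covered by finitely many coordinate charts with uniformly bounded metric coefficients. A partition of unity together with the Euclidean Sobolev inequality then gives, for all $w\in W_0^{1,p}(D)$,
\begin{equation*}
\Big(\int_D |w|^{p\kappa}\Big)^{1/\kappa}\leq C_D\int_D\big(|\nabla w|^{p}+|w|^{p}\big),
\end{equation*}
with some $\kappa>1$. We take $\kappa=n/(n-p)$ if $n>p$, and any finite $\kappa>1$ if $n\leq p$, in which case we use the Sobolev embedding into every $L^q$. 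Combined with $\lambda_{1,p}(D)>0$ from Lemma \ref{constreig}, the lower-order term can even be dropped, though this is not needed.

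The first step is to choose truncated power test functions. For $k>0$, $\beta\geq 0$ and the truncation $\varphi_k=\min(\varphi,k)$, set $\psi=\varphi_k^{\beta p}\varphi$. Since $\varphi\geq0$ lies in $W_0^{1,p}(D)$ and $\varphi_k$ is bounded, $\psi\in W_0^{1,p}(D)$ is an admissible test function. Inserting it into (\ref{eigenfunctionprop}) and using the standard chain-rule manipulations yields
\begin{equation*}
\int_D\big|\nabla(\varphi_k^{\beta}\varphi)\big|^{p}\leq C(1+\beta)^{p}\,\lambda_{1,p}(D)\int_D \varphi^{p}\varphi_k^{\beta p}.
\end{equation*}
The left side comes from the term $|\nabla\varphi|^{p}\varphi_k^{\beta p}$ plus the nonnegative term $\beta p\varphi_k^{\beta p}|\nabla\varphi_k|^{p}$, with the right side computed via $\varphi^{p-1}\psi=\varphi^{p}\varphi_k^{\beta p}$.

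Next, apply the Sobolev inequality to $w=\varphi_k^{\beta}\varphi$ and let $k\to\infty$ by monotone convergence. With $q=p(\beta+1)$ this gives the reverse Hölder step
\begin{equation*}
\|\varphi\|_{L^{q\kappa}(D)}\leq \big(C(1+\beta)^{p}\big)^{1/(\beta+1)\,p}\,\|\varphi\|_{L^{q}(D)}.
\end{equation*}
Iterating with $q_j=p\kappa^{j}$ gives $\|\varphi\|_{L^{q_j}}\leq \prod_j (C\kappa^{jp})^{1/q_j}\|\varphi\|_{L^{p}}$. The product converges because $\sum_j j\kappa^{-j}<\infty$, so letting $j\to\infty$ we obtain $\|\varphi\|_{L^\infty(D)}\leq C\|\varphi\|_{L^{p}(D)}=C<\infty$.

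The main obstacle is bookkeeping rather than ideas: at each stage we must justify that the test function is admissible, since $\varphi$ is not yet known to lie in $L^q$. This is exactly why we truncate and pass to the limit $k\to\infty$ only after the Sobolev step, where the right-hand side is already finite by the previous iteration. We also have to handle the case $n\leq p$. If $n<p$, Morrey's embedding gives boundedness directly. If $n=p$, the iteration works with any $\kappa>1$. Thus the argument is uniform once the local Sobolev inequality on the precompact $D$ is in place.
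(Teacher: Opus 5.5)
Your proposal is correct and follows essentially the paper's route: a Moser iteration on the eigenvalue identity with truncated power test functions, the Sobolev inequality on the precompact set $D$, monotone convergence in $k$ once the right-hand side is known to be finite, and iteration along $q_j=p\kappa^j$. The differences are cosmetic. You test with $\varphi_k^{\beta p}\varphi$ rather than the paper's $\varphi_k^{q-p+1}$, and you allow a lower-order term in the Sobolev inequality, which $\lambda_{1,p}(D)>0$ absorbs, whereas the paper directly invokes the Sobolev inequality with constant $S_D$.
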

	
	\begin{proof}
		Since $D$ is precompact the following \textit{Sobolev inequality} in $D$ of order $p\geq 1$ holds: for any non-negative function 
		$w\in W_{0}^{1,p}(D)$,
		\begin{equation}
			\left( \int_{D}w^{p\kappa }\right) ^{1/\kappa }\leq S_{D}\int_{D}\left\vert
			\nabla w\right\vert ^{p},  \label{SBk}
		\end{equation}%
		where $\kappa >1$ is some constant and $S_{D}$ is called the \emph{Sobolev constant} in $D$. In fact, the value of $\kappa $ is independent of $D$ and can be
		chosen as follows:%
		\begin{equation}
			\kappa =\left\{ 
			\begin{array}{ll}
				\dfrac{n}{n-p}, & \text{if }n>p, \\ 
				\text{any number}>1, & \text{if }n\leq p.%
			\end{array}%
			\right.  \label{k}
		\end{equation}
		
		Let us show inductively that $\varphi\in L^{q_j}(D)$, $q_j=p\kappa^{j}$, for any $j\geq 0$. 
		Since $\varphi\in W_{0}^{1,p}(D)$, the case $j=0$ trivially holds.
		
		Let us now assume that $\varphi\in L^{q}(D)$ for some $q\geq p$.
		For $k>0$ let $\varphi_k=\min(k, \varphi)$ and consider the test function $\psi_k=\varphi_k^{q-p+1}$. Since $q\geq p$, the function $s\mapsto \min(k, s)^{q-p+1}$ is Lipschitz and vanishes at $s=0$. Hence, $ \psi_k\in W_{0}^{1,p}(D)$ and $$\nabla \psi_k=(q-p+1)\varphi_k^{q-p}\nabla \varphi_k=(q-p+1)\varphi_k^{q-p}1_{\{\varphi<k\}}\nabla \varphi.$$
		Testing with this function in (\ref{eigenfunctionprop}) we obtain $$(q-p+1)\int_{\{\varphi<k\}}\varphi^{q-p}|\nabla \varphi|^p=\lambda_{1, p}(D)\int_D \varphi^{p-1}\varphi_k^{q-p+1}.$$ Since $\varphi_k\leq \varphi$, we get \begin{equation}\label{testingvarph}(q-p+1)\int_{\{\varphi<k\}}\varphi^{q-p}|\nabla \varphi|^p\leq \lambda_{1, p}(D)\int_D \varphi^{q},\end{equation} where the right-hand side is finite by induction hypothesis. Let now $w_k=\varphi_k^{q/p}$. Then again $w_k\in W_{0}^{1,p}(D)$ and $$\nabla w_k=\frac qp\varphi_k^{q/p-1}1_{\{\varphi<k\}}\nabla \varphi.$$ Hence, $$\int_D|\nabla w_k|^p=\left(\frac qp\right)^{p}\int_{\{\varphi<k\}}\varphi^{q-p}|\nabla \varphi|^p.$$
		Together with (\ref{testingvarph}) this yields $$\int_D|\nabla w_k|^p\leq\lambda_{1, p}(D)\frac{\left( q/p\right)^{p}}{q-p+1}\int_D \varphi^{q}.$$
		Combining this with the Sobolev inequality (\ref{SBk}) applied to $w_k$ we obtain 
		$$\left( \int_{D}\varphi_k^{q\kappa }\right) ^{1/\kappa }\leq S_D \lambda_{1, p}(D)\frac{\left( q/p\right)^{p}}{q-p+1}\int_D \varphi^{q}.$$ Since $\varphi_k\to \varphi$ pointwise as $k\to \infty$, we deduce by the monotone convergence theorem that $$\int_{D}\varphi_k^{q\kappa }\to \int_{D}\varphi^{q\kappa }.$$ Using that $\frac{\left( q/p\right)^{p}}{q-p+1}\leq C_pq^{p-1}$ it follows that $$\left( \int_{D}\varphi^{q\kappa }\right) ^{1/\kappa }\leq C_pS_D \lambda_{1, p}(D)q^{p-1}\int_D \varphi^{q},$$ which implies \begin{equation}\label{beforeite}||\varphi||_{L^{q\kappa}(D)}\leq \left(C_pS_D \lambda_{1, p}(D)q^{p-1}\right)^{1/q}||\varphi||_{L^{q}(D)}.\end{equation}
		Iterating this inequality we obtain with $q_j=p\kappa^{j}$, for any $m\geq 1$, $$||\varphi||_{L^{q_{m+1}}(D)}\leq \prod_{j=0}^{m}\left(C_pS_D \lambda_{1, p}(D)q_j^{p-1}\right)^{1/q_j}||\varphi||_{L^{p}(D)}.$$ Since $\sum_{j=0}^{\infty}\frac{1}{q_j}<\infty$ and $\sum_{j=0}^{\infty}\frac{\log q_j}{q_j}<\infty$ as $\kappa>1$, it follows that $$||\varphi||_{L^{q_{m+1}}(D)}\leq C\left(S_D \lambda_{1, p}(D)\right)^{\sum_{j=0}^{\infty}1/q_j}||\varphi||_{L^{p}(D)},$$ where $C$ is independent of $m$.
		Thus, sending $m\to \infty$, we conclude $$||\varphi||_{L^{\infty}(D)}\leq C\left(S_D \lambda_{1, p}(D)\right)^{\frac{\kappa}{p(\kappa-1)}}||\varphi||_{L^{p}(D)},$$ which finishes the proof. 
	\end{proof}
	
	\begin{lemma}\label{lemmaeigen}
		Let $D$ be a precompact open set and $\lambda_{1, p}(D)$ be the first eigenvalue of the $p$-Laplacian $\Delta _{p}$ with the corresponding eigenfunction $\varphi$.
		Let $\widetilde{\varphi}$ be the zero extension of $\varphi$. Then $v(x, t)=e^{-\lambda_{1, p}(D)t}\widetilde{\varphi}(x)^{p-1}$ is a weak subsolution of (\ref{dtv}) in $M\times [0, \infty)$.
	\end{lemma}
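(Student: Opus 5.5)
Throughout, write $\lambda=\lambda_{1,p}(D)$. The key observation is that $v^{\frac{1}{p-1}}=e^{-\frac{\lambda}{p-1}t}\widetilde{\varphi}$. Hence the spatial flux term of $v$ is
\[
|\nabla v^{\frac{1}{p-1}}|^{p-2}\nabla v^{\frac{1}{p-1}}=e^{-\lambda t}|\nabla\widetilde{\varphi}|^{p-2}\nabla\widetilde{\varphi},
\]
with the same exponential factor $e^{-\lambda t}$ as $v$ itself. Testing against a fixed spatial function therefore reduces to the eigenvalue identity (\ref{eigenfunctionprop}), except that test functions now lie in $W_0^{1,p}(M)$ rather than $W_0^{1,p}(D)$. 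The plan is to verify the regularity requirements (\ref{defvonsoluq}), then establish the key inequality
\begin{equation*}
\int_M|\nabla\widetilde{\varphi}|^{p-2}\langle\nabla\widetilde{\varphi},\nabla\eta\rangle\leq\lambda\int_M\widetilde{\varphi}^{p-1}\eta\quad\text{for all non-negative }\eta\in W_0^{1,p}(M),
\end{equation*}
and finally integrate in time.

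\textbf{Regularity.} Since $\varphi\in W_0^{1,p}(D)$, its zero extension satisfies $\widetilde{\varphi}\in W^{1,p}(M)$, with $\nabla\widetilde\varphi=0$ a.e. outside $D$. So $v^{1/(p-1)}=e^{-\lambda t/(p-1)}\widetilde\varphi\in L^p_{loc}([0,\infty);W^{1,p}(M))$. By Lemma \ref{boundedeigfun}, $\varphi$ is bounded. As $D$ is precompact, $\widetilde\varphi^{p-1}\in L^{p/(p-1)}(M)$, and because $t\mapsto e^{-\lambda t}$ is smooth, $v\in C([0,\infty);L^{p/(p-1)}(M))$.

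\textbf{The key inequality.} This is the main obstacle: a general test function $\eta$ does not vanish on $\partial D$, so (\ref{eigenfunctionprop}) cannot be applied to it directly. I would first try the truncation $\psi_\varepsilon=\min(\eta,\widetilde\varphi/\varepsilon)$, or equivalently $\eta\cdot\min(1,\widetilde\varphi/(\varepsilon\|\eta\|_\infty))$ after a density reduction to bounded $\eta$. This $\psi_\varepsilon$ belongs to $W_0^{1,p}(D)$, since it is dominated by a multiple of $\varphi\in W_0^{1,p}(D)$ and lies in $W^{1,p}$. Write $\psi_\varepsilon=\eta-(\eta-\varphi/\varepsilon)_+$. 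Applying (\ref{eigenfunctionprop}) to $\psi_\varepsilon$ and analysing the part where $\eta>\varphi/\varepsilon$, the gradient there is $\nabla\varphi/\varepsilon$, which gives
\[
\int_{\{\eta>\varphi/\varepsilon\}}|\nabla\varphi|^{p-2}\langle\nabla\varphi,\nabla\psi_\varepsilon\rangle=\varepsilon^{-1}\int_{\{\eta>\varphi/\varepsilon\}}|\nabla\varphi|^p\geq0.
\]
Hence
\[
\int_{\{\eta\leq\varphi/\varepsilon\}}|\nabla\varphi|^{p-2}\langle\nabla\varphi,\nabla\eta\rangle\leq\lambda\int\varphi^{p-1}\psi_\varepsilon\leq\lambda\int\varphi^{p-1}\eta.
\]
Letting $\varepsilon\to0$, the sets $\{\eta\leq\varphi/\varepsilon\}$ increase to $\{\varphi>0\}\cup\{\eta=0\}$. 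The integrand vanishes a.e. on $\{\varphi=0\}$ (where $\nabla\varphi=0$) and on $\{\eta=0\}$ (where $\nabla\eta=0$), so dominated convergence yields the key inequality. If this truncation approach runs into technical trouble, the fallback is the known fact that the zero extension of a non-negative $W_0^{1,p}(D)$-subsolution is a subsolution on $M$, proved by the standard Kato-type argument with $\min(\eta,k\varphi)$.

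\textbf{Time integration.} For a test function $\psi(x,t)$ as in (\ref{defvontestsoluq}), the key inequality holds at a.e. $t$ with $\eta=\psi(\cdot,t)$, giving
\[
\int_M|\nabla v^{\frac{1}{p-1}}|^{p-2}\langle\nabla v^{\frac{1}{p-1}},\nabla\psi\rangle\leq\lambda e^{-\lambda t}\int_M\widetilde\varphi^{p-1}\psi .
\]
Moreover, $\partial_t(v\psi)=-\lambda v\psi+v\partial_t\psi$, so integrating in $t$ gives
\[
\Big[\int_M v\psi\Big]_{t_1}^{t_2}-\int_{t_1}^{t_2}\!\!\int_M v\partial_t\psi=-\lambda\int_{t_1}^{t_2}\!\!\int_M e^{-\lambda t}\widetilde\varphi^{p-1}\psi .
\]
Adding these two relations produces exactly (\ref{defvonweaksolq}) with $\leq0$, so $v$ is a weak subsolution of (\ref{dtv}) in $M\times[0,\infty)$.

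The product rule in $t$ is justified because $e^{-\lambda t}$ is smooth and $\psi\in W^{1,p}_{loc}(I;L^p)$, while $v$ is in $L^\infty$ with compact support.
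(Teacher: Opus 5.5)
Your proof is correct and follows essentially the paper's argument. The paper cuts off with the product $\psi\,\theta_\varepsilon(\varphi)$, $\theta_\varepsilon(s)=\min(1,s_+/\varepsilon)$, after reducing to bounded $\psi$; this is close to the alternative you mention in passing, which is not literally equivalent to $\min(\eta,\widetilde\varphi/\varepsilon)$ but works equally well. In both versions one applies the eigenvalue identity, discards a non-negative $|\nabla\varphi|^p$ term, lets $\varepsilon\to0$, and integrates in time using $\partial_t v=-\lambda v$. Your min-truncation avoids the reduction to bounded test functions, at the cost of the standard lattice fact that $0\le u\le w\in W_0^{1,p}(D)$ with $u\in W^{1,p}$ forces $u\in W_0^{1,p}(D)$, and you also verify the regularity requirements (\ref{defvonsoluq}) that the paper leaves implicit.
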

	
	\begin{proof}
		Since $\varphi\in W_{0}^{1, p}(D)$ is the Dirichlet eigenfunction of the $p$-Laplacian $\Delta _{p}$ corresponding to the first eigenvalue $\lambda=\lambda_{1, p}(D)$ we have, for any $\zeta\in W_{0}^{1, p}(D)$, \begin{equation}\label{eigenfunction}\int_D|\nabla \varphi|^{p-2}\langle\nabla \varphi,
			\nabla \zeta\rangle=\lambda \int_D \varphi^{p-1}\zeta.\end{equation}
		Let $\psi\in W^{1, p}(M)$ be non-negative and define, for any $\varepsilon>0$, $\theta_\varepsilon(s)=\min(1, \frac{s_+}{\varepsilon})$ and $\psi_\varepsilon=\psi \theta_\varepsilon(\varphi)$. By approximation we can also assume that $\psi$ is bounded. Since $\theta_\varepsilon(s)$ is Lipschitz and $\theta_\varepsilon(0)=0$, we have $\theta_\varepsilon(\varphi)\in W_{0}^{1, p}(D)$ and thus, $\psi_\varepsilon\in W_{0}^{1, p}(D)$. Hence, we can apply (\ref{eigenfunction}) with $\psi_\varepsilon\in W_{0}^{1, p}(D)$ and obtain $$\int_D|\nabla \varphi|^{p-2}\langle\nabla \varphi,
		\nabla (\psi \theta_\varepsilon(\varphi))\rangle=\lambda \int_D \varphi^{p-1}\psi \theta_\varepsilon(\varphi).$$
		It follows that $$\int_D\theta_\varepsilon(\varphi)|\nabla \varphi|^{p-2}\langle\nabla \varphi,
		\nabla \psi \rangle+\int_D\psi\theta_\varepsilon(\varphi)^{\prime}|\nabla \varphi|^{p} =\lambda \int_D \varphi^{p-1}\psi \theta_\varepsilon(\varphi).$$ Since the second term on the left-hand side is non-negative we obtain $$\int_D\theta_\varepsilon(\varphi)|\nabla \varphi|^{p-2}\langle\nabla \varphi,
		\nabla \psi \rangle \leq\lambda \int_D \varphi^{p-1}\psi \theta_\varepsilon(\varphi).$$ For $\varepsilon \to 0$ we have $\theta_\varepsilon(\varphi)\to 1_{\{\varphi>0\}}$ so that by the dominated convergence theorem $$\int_D|\nabla \varphi|^{p-2}\langle\nabla \varphi,
		\nabla \psi \rangle \leq\lambda \int_D \varphi^{p-1}\psi .$$ Therefore, for any non-negative $\psi \in W^{1, p}(M)$, \begin{equation}\label{zerextarg}\int_M|\nabla \widetilde{\varphi}|^{p-2}\langle\nabla \widetilde{\varphi},
			\nabla \psi \rangle \leq\lambda \int_M \widetilde{\varphi}^{p-1}\psi .\end{equation}
		We have the identity $$e^{-\lambda t}|\nabla \widetilde{\varphi}|^{p-2}\nabla \widetilde{\varphi}=|\nabla v^{1/(p-1)}|^{p-2}\nabla v^{1/(p-1)}.$$
		Hence, multiplying both sides of (\ref{zerextarg}) with $e^{-\lambda t}$ and integrating over $[t_1, t_2]$ yields $$\int_{t_{1}}^{t_{2}}\int_{M}|\nabla v^{1/(p-1)}|^{p-2}\langle \nabla v^{1/(p-1)}, \nabla \psi \rangle\leq\lambda \int_{t_{1}}^{t_{2}}\int_{M}v\psi.$$
		Using $\lambda v=-\partial_t v$ and partial integration, we obtain for any $\psi$ as in (\ref{defvontestsoluq}), $$\left[\int_{M}{v\psi }\right]_{t_{1}}^{t_{2}}+\int_{t_{1}}^{t_{2}}\int_{M}|\nabla v^{1/(p-1)}|^{p-2}\langle \nabla v^{1/(p-1)}, \nabla \psi \rangle\leq \int_{t_{1}}^{t_{2}}
		\int_{M}v\partial_{t}\psi,$$ which proves that $v$ is a weak subsolution of (\ref{dtv}) in $M\times [0, \infty)$.
	\end{proof}

	\subsection{Sub-Gaussian upper estimate}

The following result proves the implication $\eqref{FKp}\Leftarrow \eqref{lemmaausdoubling}+\eqref{uppergaussint}$ from Theorem \ref{mainthmint} from the introduction.
		
	\begin{theorem}\label{mainthmFK}
	Assume that $M$ admits the sub-Gaussian upper estimate (\ref{uppergaussint}) and the volume doubling property (\ref{lemmaausdoubling}). Then $M$ satisfies the relative Faber-Krahn inequality (\ref{FKp}).
	\end{theorem}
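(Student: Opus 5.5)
The plan is to follow Grigor'yan's linear argument, with the spectral expansion replaced by the explicit subsolution of Lemma \ref{lemmaeigen}. Fix a ball $B=B(x_0,R)$ and a non-negative $w\in W_0^{1,p}(B)$, and set $D=\{w>0\}$. We may assume $D$ is open, by replacing $D$ with a slightly larger open set via approximation. Since $\lambda_{1,p}(D)\le \int|\nabla w|^p/\int w^p$ by (\ref{firsteigdef}), it suffices to prove
$$\lambda_{1,p}(D)\ge \frac{c}{R^p}\Big(\frac{\mu(B)}{\mu(D)}\Big)^{\nu}.$$
Let $\varphi$ be the non-negative first eigenfunction from Lemma \ref{constreig}. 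By Lemma \ref{boundedeigfun} it is bounded, and it is compactly supported in $\overline B$ and integrable. Lemma \ref{lemmaeigen} then shows that $v(x,t)=e^{-\lambda t}\widetilde\varphi^{\,p-1}$, with $\lambda=\lambda_{1,p}(D)$, is a bounded non-negative subsolution with initial datum $u_0=\widetilde\varphi^{\,p-1}\in L^1\cap L^\infty$.

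\textbf{Step 1 (applying (UE$_p$)).} The set $A=\operatorname{supp}u_0$ lies in $\overline B$, so we drop the exponential factor. For $x\in B$ and $t\le R^p$, the doubling property (\ref{lemmaausdoubling}) gives $\mu(B(x,t^{1/p}))\ge c(t^{1/p}/R)^N\mu(B(x_0,R))$, since $B(x_0,R)\subset B(x,2R)$. Covering $B$ by balls $B(x,\frac12 t^{1/p})$, we obtain for a.e. $y\in D$:
$$e^{-\lambda t}\varphi(y)^{p-1}\le \frac{C\|\varphi^{p-1}\|_{L^1}}{\mu(B)}\Big(\frac{R^p}{t}\Big)^{N/p}.$$

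\textbf{Step 2 (closing with norms).} To eliminate $\varphi$, I would take the supremum of the left-hand side over $y$. This gives $\|\varphi\|_\infty^{p-1}\le e^{\lambda t}C\mu(B)^{-1}(R^p/t)^{N/p}\|\varphi^{p-1}\|_{L^1}$. On the other hand, $\|\varphi^{p-1}\|_{L^1}\le \mu(D)\|\varphi\|_\infty^{p-1}$. Dividing by $\|\varphi\|_\infty^{p-1}>0$ yields
$$1\le C e^{\lambda t}\frac{\mu(D)}{\mu(B)}\Big(\frac{R^p}{t}\Big)^{N/p}\qquad\text{for all }0<t\le R^p.$$
Now choose $t=R^p(\mu(D)/\mu(B))^{\,\epsilon}$ with $\epsilon=p/(2N)$, so that $(R^p/t)^{N/p}=(\mu(B)/\mu(D))^{1/2}$. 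The bound becomes $e^{\lambda t}\ge c(\mu(B)/\mu(D))^{1/2}$, that is,
$$\lambda\ge \frac{1}{t}\Big(\tfrac12\log\tfrac{\mu(B)}{\mu(D)}-C\Big).$$
When $\mu(B)/\mu(D)\ge K$ for a large constant $K$, this gives $\lambda\ge cR^{-p}(\mu(B)/\mu(D))^{\epsilon}$, which is (FK$_p$) with $\nu=p/(2N)$. The small-ratio case $\mu(B)/\mu(D)<K$ needs a separate argument: it suffices there to show $\lambda_{1,p}(D)\ge \lambda_{1,p}(B)\ge cR^{-p}$.

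\textbf{Main obstacle.} I expect the hardest point to be the lower bound $\lambda_{1,p}(B)\ge cR^{-p}$. Step 2 alone does not give it: there the ratio $\mu(B)/\mu(D)\ge1$ is bounded, and letting $t\to R^p$ only yields $e^{\lambda R^p}\ge c$, which carries no information. My first attempt would use (UE$_p$) with the Gaussian factor at a point $x$ at distance $\sim R$ from $A$. For this I would enlarge the setting, applying Step 2 to $D\subset B$ regarded inside a bigger ball $B'=B(x_0,\Lambda R)$. By volume doubling from below (a reverse doubling, derived from connectedness and non-compactness), $\mu(B')/\mu(D)$ can be made $\ge K$. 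Then Step 2 applied in $B'$ gives $\lambda\ge c(\Lambda R)^{-p}K^{\epsilon}$, which is $\gtrsim R^{-p}$. The delicate part is justifying reverse doubling, namely $\mu(B(x_0,\Lambda R))\ge 2\mu(B(x_0,R))$ for some fixed $\Lambda$. I would prove it by picking a point at distance $2R$ on a ray to infinity and using (VD) to compare volumes.
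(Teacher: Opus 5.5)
Your proposal is correct and follows essentially the paper's proof. The common steps are: the bounded subsolution $e^{-\lambda t}\widetilde\varphi^{\,p-1}$ is fed into (UE$_p$); volume doubling compares $\mu(B(x,t^{1/p}))$ with $\mu(B(x_0,R))$; the argument splits on the size of $\mu(B)/\mu(D)$; and in the bounded-ratio case the ball is enlarged by a fixed factor using reverse doubling, which the paper cites from \cite{grigor2014upper} where you prove it by hand. The differences are cosmetic: you get $e^{-\lambda t}\le C\frac{\mu(D)}{\mu(B)}(R^p/t)^{N/p}$ from $\|\varphi^{p-1}\|_{L^1}\le \mu(D)\|\varphi\|_{L^\infty}^{p-1}$ and a covering (the paper uses the super-level set $E$ and H\"older), and you end with $\nu=p/(2N)$ instead of $p/N$.
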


\begin{remark}
The implication that the relative Faber-Krahn inequality (\ref{FKp}) implies the sub-Gaussian upper estimate (\ref{uppergaussint}) was proved in Theorem 1.1 in \cite{surig2024sharp}.
\end{remark}

\begin{proof}
Let $D$ be a precompact open set in $M$ and let $\lambda_{1, p}(D)$ be the first eigenvalue of the $p$-Laplace with the corresponding eigenfunction $\varphi\in W_0^{1, p}(D)$. Hence, for any $\psi\in W_{0}^{1, p}(D)$,$$\int_D|\nabla \varphi|^{p-2}\langle\nabla \varphi,
\nabla \psi\rangle=\lambda_{1, p}(D)\int_D \varphi^{p-1}\psi.$$  Since, for any constant $c>0$, \begin{equation*}\int_D|\nabla (c\varphi)|^{p-2}\langle\nabla (c\varphi),	\nabla \psi\rangle=\lambda_{1, p}(D)\int_D (c\varphi)^{p-1}\psi,\end{equation*} we can assume that $\int_{D}\varphi^{p}=1$. By Lemma \ref{constreig} we have  $$\lambda_{1, p}(D)=\inf_{0\ne w\in W_0^{1, p}(D)}\frac{\dint_{D}\left\vert \nabla w\right\vert ^{p}}{\dint_{D}w^{p}}>0.$$ 
We need to show that \begin{equation}\label{rFKp}\lambda_{1, p}(D)\geq \frac{c}{R^{p}}\left(\frac{\mu(B(x_0, R))}{\mu(D)}\right)^{\nu},\quad D\subset B(x_0, R).\end{equation}
Let us extend $\varphi$ by zero to $M$. From Lemma \ref{boundedeigfun} and Lemma \ref{lemmaeigen} we know that $v(x, t)=e^{-\lambda_{1, p}(D)t}\varphi(x)^{p-1}$ is a bounded weak subsolution of (\ref{dtv}) in $M\times [0, \infty)$. In particular, $v$ satisfies $v^{\frac{1}{p-1}}(\cdot, t)\in W_0^{1, p}(M)$ and has initial function $v(x, 0)=\varphi^{p-1}(x)\in L^{1}(M)\cap L^{\infty}(M)$.
Hence, the sub-Gaussian upper estimate (\ref{uppergaussint}) yields, for all $x\in M$ and all $t>0$, \begin{equation}\label{useofgaus}||v(\cdot,t)||_{L^{\infty}\left(B(x, \frac{1}{2}t^{1/p})\right)}\leq\frac{C||\varphi^{p-1}||_{L^{1}(D)}}{\mu(B(x, t^{1/p}))}\exp\left(-c\left(\frac{d(x,A)}{t^{1/p}}\right)^{\frac{p}{p-1}}\right),\end{equation} where $A=\textnormal{supp}~\varphi^{p-1}$.

Let us set $$E=\{x\in D: \varphi(x)>\frac12\mu(D)^{-1/p}\}.$$
The set $E$ has positive measure as otherwise, $$1=\int_D\varphi^p\leq \frac{1}{2^p}\mu(D)^{-1}\mu(D)<1.$$
Therefore, we can choose $x_D\in E$ such that $\mu(E\cap B(x_D, r))>0$ for every $r>0$. Thus, $$||v(\cdot,t)||_{L^{\infty}\left(B(x_D, \frac{1}{2}t^{1/p})\right)}\geq e^{-\lambda_{1, p}(D)t}\frac{1}{2^{p-1}}\mu(D)^{-(p-1)/p}.$$
Moreover, by H\"older's inequality, $$\int_D\varphi^{p-1}\leq \left(\int_D\varphi^p\right)^{(p-1)/p}\mu(D)^{1/p}=\mu(D)^{1/p}.$$
Hence, we obtain from (\ref{useofgaus}), for any $t>0$, \begin{equation}\label{uppere}e^{-\lambda_{1, p}(D)t}\leq \frac{C\mu(D)}{\mu(B(x_D, t^{1/p}))}.\end{equation}

Let $D\subset B(x_0, R)$. Then we have $d(x_D, x_0)<R$ so that (\ref{lemmaausdoubling}) yields, for all $0<r\leq R$, \begin{equation}\label{compom}
\mu(B(x_D, r))\geq c \left(\frac{r}{R}\right)^{N}\mu(B(x_0, R)).
\end{equation} 
For $r=t^{1/p}$, we deduce by (\ref{uppere}) and (\ref{compom}), for $0<r\leq R$, \begin{equation}\label{largeR}e^{-\lambda_{1, p}(D)r^{p}}\leq C\left(\frac{R}{r}\right)^{N}\frac{\mu(D)}{\mu(B(x_0, R))}.\end{equation}
Let us assume that $\frac{C\mu(D)}{\mu(B(x_0, R))}\leq \varepsilon<1$. In this case, we take $r=C^{1/N}\varepsilon ^{-1/N}R\left(\frac{\mu(D)}{\mu(B(x_0, R))}\right)^{1/N}.$ Then we have $r\leq R$, whence by (\ref{largeR}), $$e^{-\lambda_{1, p}(D)r^{p}}\leq \varepsilon.$$ Hence, $\lambda_{1, p}(D)r^{p}\geq \log \frac{1}{\varepsilon}$ and thus by our choice of $r$, $$\lambda_{1, p}(D)\geq \frac{\varepsilon^{p/N}\log \frac{1}{\varepsilon}}{C^{p/N}R^{p}}\left(\frac{\mu(B(x_0, R))}{\mu(D)}\right)^{p/N},$$ which proves (\ref{rFKp}) with $\nu=p/N$.

Hence, it remains to consider the case when $\frac{C\mu(D)}{\mu(B(x_0, R))}\geq \varepsilon$. In this case let us set $t^{1/p}=AR$, where $A>1$ is to be chosen. It is proved in \cite{grigor2014upper} that the doubling property (\ref{lemmaausdoubling}) implies that there exist positive constants $c, N_0$ such that, for all $x\in M$ and all $r>0$, \begin{equation}\label{revdoubling}\mu(B(x, Ar))\geq c A^{N_0}\mu(B(x, r)).\end{equation} Hence, by the reverse doubling (\ref{revdoubling}), $d(x_D, x_0)<R$ and the volume doubling (\ref{lemmaausdoubling}), $$\mu(B(x_D, AR))\geq cA^{N_0}\mu(B(x_0, R)).$$ Then (\ref{uppere}) with $t=(AR)^{p}$ yields $$e^{-\lambda_{1, p}(D)(AR)^{p}}\leq \frac{C\mu(D)}{\mu(B(x_D, AR))}\leq CA^{-N_0}.$$ Choosing $A$ large enough so that $CA^{-N_0}\leq 1/2$ we get $e^{-\lambda_{1, p}(D)(AR)^{p}}\leq 1/2$ and therefore, $$\lambda_{1, p}(D)\geq \frac{\log 2}{A^{p}R^{p}}\geq \frac{\log 2\varepsilon^{p/N}}{A^{p}C^{p/N}R^{p}}\left(\frac{\mu(B(x_0, R))}{\mu(D)}\right)^{p/N},$$ which proves also (\ref{rFKp}) in this case with $\nu=p/N$. 
\end{proof}

\section{Uniform Faber-Krahn inequality}

\label{Secini} 

\begin{lemma}\label{logsobolev}
	Suppose that $M$ admits a uniform $p$-Faber-Krahn inequality (\ref{uFKp}). Then, for all precompact open $\Omega\subset M$, any non-negative $v\in  C_{0}^{\infty}(\Omega)$ and any $0<\varepsilon<1$, we have \begin{equation}\label{lemFk}\int_{\Omega}{|\nabla v|^{p}d\mu}\geq \Lambda_{p}\left(\left(\frac{p}{\varepsilon}\right)^{p-1}\frac{\left(\int_{\Omega}v^{p-1}\right)^{p}}{\left(\int_{\Omega}v^{p}\right)^{p-1}}\right)(1-\varepsilon)\int_{\Omega}v^{p}.\end{equation}
\end{lemma}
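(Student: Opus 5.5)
Let $v\ge 0$ be in $C_0^\infty(\Omega)$, not identically zero, and set $I=\int_\Omega v^p$, $J=\int_\Omega v^{p-1}$. The plan is the usual truncation argument from the linear case. We apply (\ref{uFKp}) to $w=(v-s)_+$ on the open set $U_s=\{v>s\}$, with a level $s>0$ chosen so that $\mu(U_s)$ is controlled by the argument of $\Lambda_p$ in (\ref{lemFk}). Then we compare $\int w^p$ with $\int v^p$ from below.

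First, the level-set volume. Chebyshev's inequality gives
\[
\mu(U_s)\le s^{-(p-1)}\int_\Omega v^{p-1}=s^{-(p-1)}J .
\]
The function $w=(v-s)_+$ lies in $W_0^{1,p}(U_s)$ and $U_s\subset\Omega$ is precompact and open. Hence (\ref{uFKp}) and the monotonicity of $\Lambda_p$ give
\[
\int_\Omega|\nabla v|^p\ \ge\ \int_{U_s}|\nabla w|^p\ \ge\ \Lambda_p\big(s^{-(p-1)}J\big)\int_\Omega (v-s)_+^p .
\]

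Second, the lower bound on $\int (v-s)_+^p$. Since $t\mapsto t^p$ is convex with derivative $pt^{p-1}$, we have $(a-s)_+^p\ge a^p-p s a^{p-1}$ for $a,s\ge0$. For $a\le s$ this holds because the right-hand side is at most $a^p-p a^p\le 0$. For $a>s$ it follows from $a^p-(a-s)^p\le p s a^{p-1}$. Integrating,
\[
\int_\Omega (v-s)_+^p\ \ge\ I-psJ .
\]

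Third, the choice of $s$. Set $s=\varepsilon I/(pJ)$, so that $\int (v-s)_+^p\ge(1-\varepsilon)I$. With this choice the argument of $\Lambda_p$ becomes
\[
s^{-(p-1)}J=\Big(\frac{p}{\varepsilon}\Big)^{p-1}\frac{J^{p}}{I^{p-1}},
\]
which is exactly the expression in (\ref{lemFk}). Combining the three steps gives the claim.

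The only delicate points are technical. One is checking that $(v-s)_+\in W^{1,p}_0(U_s)$; this holds because $v$ is smooth with compact support and $s>0$. The other is the degenerate case $\mu(U_s)=0$, where both sides are trivially consistent. The elementary convexity inequality is the key step, and it needs no assumption beyond $p>1$.
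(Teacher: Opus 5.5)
Your proof is correct and follows the paper's argument essentially verbatim. It uses the same pointwise inequality $v^p\le (v-s)_+^p+psv^{p-1}$, applies the Faber--Krahn inequality to $(v-s)_+$ on $\{v>s\}$ together with Chebyshev's bound $\mu(\{v>s\})\le s^{-(p-1)}\int_\Omega v^{p-1}$ and the monotonicity of $\Lambda_p$, and makes the same choice $s=\frac{\varepsilon}{p}\int_\Omega v^p/\int_\Omega v^{p-1}$; you additionally justify the elementary inequality and the membership $(v-s)_+\in W_0^{1,p}(\{v>s\})$, which the paper leaves implicit.
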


\begin{proof}
	For any $s>0$ it holds that \begin{equation}\label{ineqforFk}v^{p}\leq (v-s)_{+}^{p}+psv^{p-1}.\end{equation}		
	Let us set $\Omega_{s}=\{v>s\}$. Then integrating (\ref{ineqforFk}) we obtain $$\int_{\Omega}v^{p}\leq \int_{\Omega_{s}}(v-s)_{+}^{p}+ps \int_{\Omega}v^{p-1}.$$
	Hence, we obtain from the uniform $p$-Faber-Krahn inequality (\ref{uFKp}) in $\Omega_{s}$, $$\int_{\Omega}v^{p}\leq \frac{\int_{\Omega}{|\nabla v|^{p}}}{\Lambda_{p}(\mu(\Omega_{s}))}+ps \int_{\Omega}v^{p-1}.$$
	Since $\mu(\Omega_{s})\leq \frac{1}{s^{p-1}}		\int_{\Omega}v^{p-1}$ we deduce $$\int_{\Omega}{|\nabla v|^{p}}\geq \Lambda_{p}\left(\frac{1}{s^{p-1}}		\int_{\Omega}v^{p-1}\right)\left(\int_{\Omega}v^{p}-ps \int_{\Omega}v^{p-1}\right).$$
	Choosing $$s=\frac{\varepsilon}{p}	\frac{\int_{\Omega}v^{p}}{\int_{\Omega}v^{p-1}}$$ we therefore obtain (\ref{lemFk}).\end{proof}

\begin{lemma}\label{thmfinexinf}
	Let $u$ be a non-negative bounded subsolution of (\ref{dtv}) in $M\times \mathbb{R}_{+}$ with $u(\cdot, 0)=u_{0}\in L^{1}(M)$. Suppose that $M$ admits a uniform $p$-Faber-Krahn inequality (\ref{uFKp}) with Faber-Krahn function $\Lambda_{p}$. Then, for any $\sigma\geq \frac{p}{p-1}$ and for any $0<\varepsilon<1$,  \begin{equation}\label{LqFkup}||u(t)||_{L^{\sigma}(M)}\leq \left(\frac{p}{\varepsilon}\right)^{\frac{(p-1)(\sigma-1)}{\sigma}} \frac{||u_{0}||_{L^{1}(M)}}{\gamma\left(c\sigma^{-(p-1)}(1-\varepsilon)t\right)^{\frac{\sigma-1}{\sigma}}} ,\end{equation} where $\gamma$ is defined by \begin{equation}\label{gamma}t=\int_{0}^{\gamma(t)}\frac{dv}{\Lambda_{p}(v)v}.\end{equation}
\end{lemma}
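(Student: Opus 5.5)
We argue via a differential inequality for $J(t)=\int_M u^\sigma(\cdot,t)$. The Caccioppoli inequality (Lemma \ref{Lem1}) will give the gradient term, Lemma \ref{logsobolev} applied to $w=u^{\sigma/p}$ will convert it into a lower bound in terms of $J$, and an $L^1$ control will close the argument.

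By Lemma \ref{Lem1}, $J$ is non-increasing and, in integrated form,
$$J(t_2)-J(t_1)\le -c_1(\sigma-1)\sigma^{1-p}\int_{t_1}^{t_2}\int_M|\nabla u^{\sigma/p}|^p .$$
By (\ref{valpha}) and approximation, Lemma \ref{logsobolev} extends to $w=u^{\sigma/p}(\cdot,t)$. Formally, on exhausting precompact sets and then passing to the limit, it gives
$$\int_M|\nabla w|^p\ge(1-\varepsilon)\Lambda_p\Big(\big(\tfrac p\varepsilon\big)^{p-1}\frac{(\int w^{p-1})^p}{(\int w^p)^{p-1}}\Big)J .$$
Here $\int w^p=J$ and $\int w^{p-1}=\int u^{\sigma(p-1)/p}$.

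The key point is to bound $\int u^{\sigma(p-1)/p}$ using $J$ and $\|u\|_{L^1}$. Since $1\le \sigma(p-1)/p\le\sigma$, Hölder interpolation gives
$$\int u^{\sigma(p-1)/p}\le \|u\|_1^{\theta}J^{1-\theta'}$$
with suitable exponents. I also need $\|u(t)\|_{L^1}\le\|u_0\|_{L^1}$ for subsolutions. I would derive this from the subsolution inequality tested against cutoff functions and then let the cutoff tend to $1$ (Lemma \ref{monl1} covers only solutions). I expect the exponents to combine so that the argument of $\Lambda_p$ becomes
$$(p/\varepsilon)^{p-1}\|u_0\|_1^{\sigma/(\sigma-1)}J^{-1/(\sigma-1)}.$$
Check: with $a=\sigma(p-1)/p$, interpolation between $L^1$ and $L^\sigma$ gives $(\int u^a)^p\le \|u\|_1^{\,p(\sigma-a)/(\sigma-1)}J^{\,p(a-1)/(\sigma-1)}$. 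Dividing by $J^{p-1}$ leaves the power $\frac{p(a-1)}{\sigma-1}-(p-1)=-\frac{p-1}{\sigma-1}$ of $J$, and the power of $\|u\|_1$ is $\sigma\frac{p-1}{\sigma-1}$. So the argument is
$$v(t):=(p/\varepsilon)^{p-1}\big(\|u_0\|_1^{\sigma}/J\big)^{(p-1)/(\sigma-1)}.$$
Since $\Lambda_p$ is non-increasing, replacing $\|u\|_1$ by the larger $\|u_0\|_1$ is legitimate.

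This yields
$$J'\le -c(\sigma-1)\sigma^{1-p}(1-\varepsilon)\Lambda_p(v)J,$$
understood in integrated form. Writing $J$ in terms of $v$, we have $\log J=-\frac{\sigma-1}{p-1}\log v+\mathrm{const}$. Hence
$$\frac{v'}{v\Lambda_p(v)}\ge c'\sigma^{1-p}(1-\varepsilon),$$
where the factor $(\sigma-1)$ cancels against $(\sigma-1)/(p-1)$ up to a constant depending on $p$. Integrating from $0$ (where $v\ge0$) gives
$$\int_0^{v(t)}\frac{ds}{s\Lambda_p(s)}\ge c\sigma^{1-p}(1-\varepsilon)t,$$
that is, $v(t)\ge\gamma(c\sigma^{-(p-1)}(1-\varepsilon)t)$. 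Unwinding, $J^{1/\sigma}\le (p/\varepsilon)^{(p-1)(\sigma-1)/\sigma}\|u_0\|_1\,\gamma(\cdot)^{-(\sigma-1)/\sigma}$, which is (\ref{LqFkup}).

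The main obstacle is rigor in the ODE step, because $J$ is only known to be continuous and non-increasing. I would handle it by working with the integrated form on small intervals, using the monotonicity of $J$ and of $\Lambda_p$ to freeze the coefficient at the right endpoint, and comparing with the function $F(J)=\int^{v(J)}ds/(s\Lambda_p(s))$ via a Riemann-sum or chain-rule argument for monotone functions. A secondary difficulty is justifying Lemma \ref{logsobolev} for $w\in W^{1,p}(M)$ rather than $C_0^\infty(\Omega)$. The proof of Lemma \ref{logsobolev} works directly in $W^{1,p}$ on the level sets $\{w>s\}$, which have finite measure; I would assume these can be handled by precompact approximation.
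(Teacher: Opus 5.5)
Your route is the paper's: Lemma \ref{Lem1} gives a differential inequality for $J(t)=\int_M u^\sigma$. Lemma \ref{logsobolev} is applied to $u^{\sigma/p}\eta_R$ with $R\to\infty$. H\"older interpolation between $L^1$ and $L^\sigma$, together with an $L^1$ bound, controls $\int_M u^{\sigma(p-1)/p}$. A change of variables then reduces everything to the definition of $\gamma$.

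However, your exponent check is wrong for $p\neq 2$, and the $v(t)$ you build from it breaks the argument. With $a=\sigma(p-1)/p$ one has $p(\sigma-a)=\sigma$ and $p(a-1)=\sigma(p-1)-p$, hence
\[
\frac{\bigl(\int_M u^{a}\bigr)^{p}}{J^{p-1}}\le \|u\|_{1}^{\frac{\sigma}{\sigma-1}}\,J^{\frac{\sigma(p-1)-p}{\sigma-1}-(p-1)}=\|u\|_{1}^{\frac{\sigma}{\sigma-1}}\,J^{-\frac{1}{\sigma-1}}.
\]
So the powers are $\frac{\sigma}{\sigma-1}$ and $-\frac{1}{\sigma-1}$, which is what you first expected, not $\frac{\sigma(p-1)}{\sigma-1}$ and $-\frac{p-1}{\sigma-1}$. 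For example, with $p=3$ and $\sigma=3/2$ one has $a=1$, and the quotient is $\|u\|_1^3J^{-2}$, not $\|u\|_1^6J^{-4}$.

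With your $v=(p/\varepsilon)^{p-1}(\|u_0\|_1^\sigma/J)^{(p-1)/(\sigma-1)}$, the replacement inside the non-increasing $\Lambda_p$ is unjustified, because the base $\|u_0\|_1^\sigma/J$ can lie on either side of $1$. Even granting the replacement, $v(t)\ge\gamma$ unwinds to $\|u(t)\|_\sigma\le(p/\varepsilon)^{(\sigma-1)/\sigma}\|u_0\|_1\gamma^{-(\sigma-1)/(\sigma(p-1))}$, not to (\ref{LqFkup}). So your final line does not follow from your own $v$.

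The repair is the paper's variable $w=(p/\varepsilon)^{p-1}\|u_0\|_1^{\sigma/(\sigma-1)}J^{-1/(\sigma-1)}$. Then $\log J=-(\sigma-1)\log w+\mathrm{const}$, the factor $\sigma-1$ cancels exactly with no extra $p$-dependent constant, and $w(t)\ge\gamma(c_1\sigma^{1-p}(1-\varepsilon)t)$ unwinds precisely to (\ref{LqFkup}).

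On the $L^1$ bound, the paper simply invokes Lemma \ref{monl1}. Your remark that this lemma is stated only for solutions is fair, but the cut-off derivation you propose is not routine. Testing with $\eta_R$ leaves the flux term $\int_0^t\int_M|\nabla u^{\frac{1}{p-1}}|^{p-2}\langle\nabla u^{\frac{1}{p-1}},\nabla\eta_R\rangle$. Since you only know $\nabla u^{\frac{1}{p-1}}\in L^p$, H\"older bounds this term by a vanishing tail times $(t\,\mu(B_{2R}))^{1/p}/R$. That bound need not tend to zero unless $\mu(B_R)=O(R^p)$. So this step needs an additional ingredient, for instance a comparison with the solution having the same initial data, to which Lemma \ref{monl1} applies.

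Your treatment of the ODE step, freezing $\Lambda_p$ at the right endpoint of small intervals, is fine. It is equivalent to the paper's a.e.\ differentiation of the monotone function $\int_M u^\sigma(\cdot,t)$.
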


\begin{proof}
	Set $$\Phi(t)=\int_{M }u^{\sigma }(t).$$ From (\ref{vetacor}) we have, for any $t>0$ and any $h>0$, $$ \Phi(t+h)-\Phi(t) \leq-c_{1}\left( \sigma  -1\right)\sigma ^{1-p}
	\int_{t}^{t+h}\int_{M}\left\vert \nabla \left( u^{\sigma/p } \right) \right\vert
	^{p}.$$ Hence, dividing both sides by $h$ and sending $h\to 0$, we deduce, for a.e. $t>0$, \begin{equation}\label{diffgrad}\frac{d}{dt}\Phi(t)\leq-c_{1}\left( \sigma  -1\right)\sigma ^{1-p}
	\int_{M}\left\vert \nabla \left( u^{\sigma/p } \right) \right\vert
	^{p}.\end{equation} Let us combine this with the estimate (\ref{lemFk}) applied to $v_R=u^{\sigma/p}\eta_R\in W_{0}^{1, p}(B_{2R})$ (cf. (\ref{valpha})) where $\eta_R$ is a cut-off function of some precompact geodesic ball $B_R$ in $B_{2R}$. Indeed, we have, since $\sigma\geq \frac{p}{p-1}$, $$\int_{M}{u^{\sigma}|\nabla \eta_R|^{p}}\leq \frac{C}{R^{p}} \int_{M}{u^{\sigma}}\to 0\quad\textnormal{as}~R\to \infty$$ and by dominated convergence $$\int_{M}{|\eta_R\nabla u^{\sigma/p}|^{p}}\to \int_{M}{|\nabla u^{\sigma/p}|^{p}} \quad\textnormal{as}~R\to \infty.$$ Similarly, we see that $$\int_{M}v_R^{p-1}\to \int_{M}u^{\sigma\frac{p-1}{p}}\quad \textnormal{and}\quad \int_{M}v_R^{p}\to \int_{M}u^{\sigma}.$$ Hence, with the monotonicity of $\Lambda_p$ we obtain from (\ref{diffgrad}), for any $0<\varepsilon<1$, $$\frac{d}{dt}\Phi(t)\leq-c_{1}\left( \sigma  -1\right)\sigma ^{1-p} \Lambda_{p}\left(\left(\frac{p}{\varepsilon}\right)^{p-1}\frac{\left(\int_{M}u^{\sigma\frac{p-1}{p}}\right)^{p}}{\left(\int_{M}u^{\sigma}\right)^{p-1}}\right)(1-\varepsilon)\int_{M}u^{\sigma}.$$
	We have from Hölder's inequality  $$\int_{M}u^{\sigma\frac{p-1}{p}}\leq \left(\int_{M}u^{\sigma}\right)^{\frac{\sigma\frac{p-1}{p}-1}{\sigma-1}} \left(\int_{M}u\right)^{\frac{\sigma-\sigma\frac{p-1}{p}}{\sigma-1}}.$$ 
	Therefore, using also Lemma \ref{monl1}, $$\frac{\left(\int_{M}u^{\sigma\frac{p-1}{p}}\right)^{p}}{\left(\int_{M}u^{\sigma}\right)^{p-1}}\leq \frac{\left(\int_{M}u^{\sigma}\right)^{\frac{\sigma(p-1)-p}{\sigma-1}}\left(\int_{M}u_{0}\right)^{\frac{\sigma p-\sigma(p-1)}{\sigma-1}}}{\left(\int_{M}u^{\sigma}\right)^{p-1}}=\frac{\left(\int_{M}u_{0}\right)^{\frac{\sigma}{\sigma-1}}}{\left(\int_{M}u^{\sigma}\right)^{\frac{1}{\sigma-1}}}.$$
	Hence, $$\frac{d}{dt}\Phi(t)\leq-c_{1}\left( \sigma  -1\right)\sigma ^{1-p} \Lambda_{p}\left(\left(\frac{p}{\varepsilon}\right)^{p-1}\frac{\left(\int_{M}u_{0}\right)^{\frac{\sigma}{\sigma-1}}}{\Phi(t)^{\frac{1}{\sigma-1}}}\right)(1-\varepsilon) \Phi(t).$$ Integrating this inequality, we obtain $$\int_{\Phi(t_{0})}^{\Phi(t)}\frac{d\Phi}{\Lambda_{p}\left(\left(\frac{p}{\varepsilon}\right)^{p-1}\frac{\left(\int_{M}u_{0}\right)^{\frac{\sigma}{\sigma-1}}}{\Phi(t)^{\frac{1}{\sigma-1}}}\right)\Phi(t)}\leq -c_{1}\left( \sigma  -1\right)\sigma ^{1-p}(1-\varepsilon)(t-t_{0}) .$$ Making the change $w=\left(\frac{p}{\varepsilon}\right)^{p-1}\frac{\left(\int_{M}u_{0}\right)^{\frac{\sigma}{\sigma-1}}}{\Phi(t)^{\frac{1}{\sigma-1}}},$ it follows that
	\begin{align*}\int_{0}^{w(t)}\frac{dw}{\Lambda_{p}\left(w\right)w}\geq \int_{w(t_{0})}^{w(t)}\frac{dw}{\Lambda_{p}\left(w\right)w}\geq c_{1}\sigma ^{1-p}(1-\varepsilon)(t-t_{0}).\end{align*}
	Sending $t_{0}\to 0$ and using the definition of $\gamma(t)$, we deduce $$w(t)\geq \gamma\left(c_{1}\sigma ^{1-p}(1-\varepsilon)t\right),$$ which yields (\ref{LqFkup}).\end{proof}

The following result contains Theorem \ref{ThmSobint} from the introduction.

\begin{theorem}\label{uniFKthm}
Let $u$ be a non-negative bounded subsolution of (\ref{dtv}) in $M\times \mathbb{R}_{+}$ with $u(\cdot, 0)=u_{0}\in L^{1}(M)$. Suppose that $M$ admits a uniform $p$-Faber-Krahn inequality (\ref{uFKp}) with Faber-Krahn function $\Lambda_{p}$. Fix some $\sigma\geq \frac{p}{p-1}$. Then, for all $x\in M$ and all $t>0$, \begin{equation}\label{Linfupper}||u(\cdot,t)||_{L^{\infty}\left(B(x,\frac{1}{2}t^{1/p})\right)}\leq \frac{CS_B^{1/(\nu \sigma)}||u_{0}||_{L^{1}(M)}}{t^{1/(\nu\sigma)}\gamma(c\sigma^{-(p-1)}t)^{(\sigma-1)/\sigma}}\exp\left(-c\left(\frac{d(x,A)}{t^{1/p}}\right)^{\frac{p}{p-1}}\right),\end{equation} where $\gamma$ is defined by (\ref{gamma}) and $S_B$ denotes the Sobolev constant (cf. (\ref{SBk})) in $B\left( x,t^{1/p}\right)$, $\nu$ is given by (\ref{nu}) and $c, C$ depend on $p$ and $\nu$.
\end{theorem}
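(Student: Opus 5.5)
We combine three ingredients: the $L^{\sigma}$ decay of Lemma \ref{thmfinexinf}, a local mean value ($L^\sigma\to L^\infty$) inequality obtained by Moser iteration from the Caccioppoli inequality (Lemma \ref{Lem1}) and the local Sobolev inequality (\ref{SBk}) in $B=B(x,t^{1/p})$, and a sub-Gaussian factor from a separate off-diagonal argument. The plan proceeds in four steps.

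\emph{Step 1: mean value inequality.} We first prove a local $L^\infty$ bound. For a bounded subsolution $u$ in $B(x,\rho)\times[t-\rho^p,t]$ with $\rho=t^{1/p}$, we want
\begin{equation*}
\|u(\cdot,t)\|_{L^\infty(B(x,\rho/2))}\leq C\left(\frac{S_B}{\rho^{p}}\right)^{\frac{1}{\nu\sigma}}\left(\int_{t/2}^{t}\!\!\int_{B}u^{\sigma}\right)^{1/\sigma}\rho^{-p/\sigma}\cdots
\end{equation*}
More precisely, the target is $\|u(t)\|_{L^\infty(B(x,\rho/2))}^{\sigma}\le C\,(S_B/t)^{1/\nu}\sup_{s\in[t/2,t]}\int_B u^\sigma(s)$ up to the exact power bookkeeping, where $\nu$ is related to $\kappa$ by $\kappa=1/(1-\nu)$, i.e.\ $\nu=p/n$ when $n>p$. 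To obtain it, we use a localized version of (\ref{vetacor}) with cut-off functions $\eta$ in space and time, applied to the powers $u^{\sigma_k}$ with $\sigma_k=\sigma\kappa'^k$ on shrinking cylinders. We then combine the resulting $\sup_t\int u^{\sigma_k}\eta^p$ and $\iint|\nabla(u^{\sigma_k/p}\eta)|^p$ via (\ref{SBk}) and Hölder into a reverse Hölder step $\sigma_k\to\sigma_{k+1}$, and iterate as in Lemma \ref{boundedeigfun}. The key point is that the constants grow only geometrically, so the product converges.

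\emph{Step 2: on-diagonal bound.} We insert Lemma \ref{thmfinexinf}, using monotonicity in time, which is available for subsolutions by (\ref{vetacor}). Taking $\varepsilon=1/2$, we bound $\sup_{s\in[t/2,t]}\|u(s)\|_{L^\sigma}\le C\|u_0\|_{L^1}\gamma(c\sigma^{-(p-1)}t)^{-(\sigma-1)/\sigma}$. Combined with Step 1, this gives (\ref{Linfupper}) without the exponential factor, with prefactor $CS_B^{1/(\nu\sigma)}t^{-1/(\nu\sigma)}$.

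\emph{Step 3: exponential factor (main obstacle).} Since the equation is nonlinear, we cannot use a heat-kernel integrated-maximum-principle directly. We would first try a weighted Davies-type estimate: test (\ref{defvonweaksolq}) with $u^{\sigma-1}e^{\xi(x,s)}$, where
\begin{equation*}
\xi(y,s)=-\frac{d(y,A)^{p/(p-1)}}{(Cs)^{1/(p-1)}}\quad\text{or}\quad \xi(y,s)=\alpha\, d(y,A)-C\alpha^{p}s,
\end{equation*}
the latter with $\alpha$ optimized at the end. We need the weighted $L^\sigma$ norm $\int u^\sigma e^{\xi}$ to be non-increasing, which should follow from a Young-inequality absorption of the $|\nabla u^{1/(p-1)}|^{p-1}|\nabla\xi|$ term, using the homogeneity of the equation. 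Alternatively, we could invoke the finite propagation/off-diagonal estimate behind Theorem 1.1 of \cite{surig2024sharp}, whose proof is local and relies only on the mean value inequality and $L^1$ control. Applying Step 1 to $u$ restricted near $B(x,t^{1/p})$, with the local $L^\sigma$ mass controlled by the weighted decay, then produces the factor $\exp(-c(d(x,A)/t^{1/p})^{p/(p-1)})$.

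\emph{Step 4: expected difficulties.} The hardest part is combining the weighted estimate with the Faber--Krahn $L^\sigma$ decay so that the $\gamma$ factor survives alongside the exponential. One route is to split time into $[0,t/2]$ and $[t/2,t]$: use Lemma \ref{thmfinexinf} on the first half, and use the weighted estimate restarted at $t/2$, with datum $u(t/2)$, on the second half. A Hölder interpolation between the weighted and unweighted bounds would then give the product form, at the cost of changing $c$.
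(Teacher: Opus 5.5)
Your Steps 1--2 match the paper. It derives \eqref{condforgamma} from Lemma~\ref{thmfinexinf} and then closes the proof by citing Lemma 5.1 of \cite{surig2024sharp}, which turns exactly such a global $L^{\sigma}$ bound into \eqref{uxTupper}, mean value inequality and sub-Gaussian factor included. Because you rebuild that lemma yourself, everything rests on Steps 3--4, and there the proposal has a genuine gap. None of your routes gives an off-diagonal bound in which $\|u_0\|_{L^1}$, $\gamma(ct)^{-(\sigma-1)}$ and $\exp(-c(d(x,A)/t^{1/p})^{p/(p-1)})$ appear as a product.

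\begin{itemize}
\item Your first weight $\xi=-d(y,A)^{p/(p-1)}/(Cs)^{1/(p-1)}$ has the wrong sign. Since $\partial_s\xi>0$, the inequality $\partial_s\xi+C'|\nabla\xi|^p\le 0$ that your Young absorption requires fails. In any case $e^{\xi}\le 1$ carries no information away from $A$.
\item The Davies weight $\alpha d(y,A)-C\alpha^p s$ does work, but started at $s=0$ it only gives $\int_{M\setminus B(A,r)}u^\sigma(\cdot,t)\le W:=e^{-c(r/t^{1/p})^{p/(p-1)}}\|u_0\|_{L^\sigma}^\sigma$. Together with $\int_M u^\sigma(\cdot,t)\le U:=C\|u_0\|_{L^1}^\sigma\gamma(ct)^{1-\sigma}$ this is insufficient. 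For concentrated $u_0$ with fixed $L^1$ norm, $W$ is arbitrarily large, so $\min(U,W)=U$ has no exponential factor. Any H\"older interpolation $U^\theta W^{1-\theta}\ge\min(U,W)$ is no better, and changing $c$ does not help.
\item Restarting at $t/2$ is circular. The weighted functional $\int u(\cdot,t/2)^\sigma e^{\xi(\cdot,t/2)}$ has a weight that is large off $A$, while $u(\cdot,t/2)$ is no longer supported in $A$.
\item ``$L^1$ control'' cannot produce the exponential in this generality. For $p=2$ on $\mathbb{H}^n$ (covered by Example~\ref{ex2}) the $L^1$ mass drifts to distance $\simeq t$, so the factor must be extracted at the $L^\sigma$ level together with the decay of $\int_M u^\sigma$. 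Moreover, Theorem 1.1 of \cite{surig2024sharp} is proved under \eqref{FKp}, which fails here.
\end{itemize}

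The missing idea is a two-time localized weighted estimate iterated in time, as in Grigor'yan's integrated maximum principle. Test with $u^{\sigma-1}e^{\xi}$, where $\xi(y,s)=-(R-d(y,A))_+^{p/(p-1)}/(C(t-s))^{1/(p-1)}$ for $s<t$; this satisfies $\partial_s\xi+C'|\nabla\xi|^p\le 0$ for $C$ large. It gives, for $r<R$ and $\tau<t$,
\begin{equation*}
\int_{M\setminus B(A,R)}u^{\sigma}(\cdot,t)\le\int_{M\setminus B(A,r)}u^{\sigma}(\cdot,\tau)+\exp\left(-\frac{(R-r)^{p/(p-1)}}{(C(t-\tau))^{1/(p-1)}}\right)\int_{M}u^{\sigma}(\cdot,\tau).
\end{equation*}
Iterate this along $t_k\downarrow 0$ and $R_k\downarrow R/2$, with $R_k-R_{k+1}$ decaying more slowly than $(t_k-t_{k+1})^{1/p}$. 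Insert \eqref{condforgamma} at each $t_k$ and sum; this is where a regularity property of $\gamma$ enters (note that $(\log\gamma)'=\Lambda_p(\gamma)$ is non-increasing). The result is
\begin{equation*}
\int_{M\setminus B(A,R)}u^\sigma(\cdot,s)\le C\|u_0\|_{L^1}^\sigma\gamma(cs)^{1-\sigma}e^{-c(R/s^{1/p})^{p/(p-1)}},
\end{equation*}
which you then feed into your Step 1 on $B(x,t^{1/p})$ with $R\simeq d(x,A)$. Either carry this out, or, as the paper does, invoke Lemma 5.1 of \cite{surig2024sharp} for this whole passage; its input is precisely a bound of the form \eqref{condforgamma}.
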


\begin{proof}
It follows from (\ref{LqFkup}) that 	
\begin{equation}\label{condforgamma}\int_{M}u^{\sigma}(\cdot, t)\leq \frac{C_{\sigma}||u_{0}||^{\sigma}_{L^{1}(M)}}{\gamma\left(c\sigma^{-(p-1)}t\right)^{\sigma-1}},\end{equation} where $\gamma$ is defined by (\ref{gamma}). Then it follows from Lemma 5.1 in \cite{surig2024sharp} that, for all $x\in M$ and all $t>0$, \begin{align}\label{uxTupper}||u(\cdot,t)||_{L^{\infty}\left(B(x,\frac{1}{2}t^{1/p})\right)}\leq&\left(\frac{C_{\sigma}S_{B}^{1/\nu}||u_{0}||^{\sigma}_{L^{1}(M)}}{t^{1/\nu}\gamma(c\sigma^{-(p-1)}t)^{\sigma-1}}\right)^{\frac{1}{\sigma}}\exp\left(-c\left(\frac{d(x,A)}{t^{1/p}}\right)^{\frac{p}{p-1}}\right),\end{align} which proves (\ref{Linfupper}).
\end{proof}

\begin{remark}\label{remuniformSb}
For example, if $n>p$ and $M$ satisfies the $p$-Faber-Krahn inequality (\ref{uFKp}) with Faber-Krahn function $\Lambda_{p}(v)\geq cv^{-p/n}$, then $S_B\leq \textnormal{const}$ for all balls, that is, in this case the constant $S_B$ in (\ref{Linfupper}) is uniformly bounded in $x$ and $t$.
\end{remark}

In the following examples let us assume that $n>p$.

\begin{example}\label{ex1}
Suppose that, for all large $v$, \begin{equation}\label{lowerfk}\Lambda_p(v)\simeq v^{-p/n}.
\end{equation} Then
by the definition of $\gamma$ and using (\ref{lowerfk}) we obtain, for large $t$, $$t=\int_{0}^{\gamma(t)}\frac{dv}{\Lambda_{p}(v)v}\simeq \gamma(t)^{p/n}.$$ Thus, $\gamma(t)\simeq t^{n/p},$ which implies by Remark \ref{remuniformSb} and (\ref{Linfupper}), for large $t$,
\begin{equation}\label{Linfuppercor}||u(\cdot,t)||_{L^{\infty}\left(B(x,\frac{1}{2}t^{1/p})\right)}\leq \frac{C||u_{0}||_{L^{1}(M)}}{t^{n/p}}\exp\left(-c\left(\frac{d(x,A)}{t^{1/p}}\right)^{\frac{p}{p-1}}\right).\end{equation}
\end{example}

\begin{example}\label{ex2}
Suppose that, for all large $v$, \begin{equation}\label{lowerfk2}\Lambda_p(v)\simeq (\log v)^{-\alpha},
\end{equation} where $\alpha\geq0$. For example, when $M$ is a \textit{hyperbolic space} $\mathbb{H}^{n}$ we have $\alpha=0$. For $\alpha>0$ see \cite{coulhon1993isoperimetrie}. Then, for large $t$, $$t=\int_{0}^{\gamma(t)}\frac{dv}{\Lambda_{p}(v)v}\simeq \log (\gamma (t))^{\alpha+1}.$$
Therefore, $$\gamma(t)\simeq \exp(c t^{1/(\alpha+1)}),$$ and we obtain from Remark \ref{remuniformSb} and (\ref{Linfupper}), for large $t$,
$$||u(\cdot,t)||_{L^{\infty}\left(B(x,\frac{1}{2}t^{1/p})\right)}\leq \frac{C||u_{0}||_{L^{1}(M)}}{t^{n/(p\sigma)}\exp(c_\sigma t^{1/(\alpha+1)})}\exp\left(-c\left(\frac{d(x,A)}{t^{1/p}}\right)^{\frac{p}{p-1}}\right).$$
\end{example}

We say that a function $f$ has \textit{a polynomial decay} if there is $c>0$ such that, for all $t>0$, \begin{equation}\label{polydecay}f(2t)\geq cf(t).
\end{equation}

The next result is in some sense a converse statement to Theorem \ref{uniFKthm}.

\begin{proposition}\label{convuFK}
Fix some $\sigma\geq \frac{p}{p-1}$ and set $$F_{\sigma}(t)=t^{n/(p\sigma)}\gamma(c\sigma^{-(p-1)}t)^{(\sigma-1)/\sigma},$$ where $\gamma$ is a positive, unbounded, increasing $C^{1}$-function such that $\Lambda_p(\gamma(t)):=\frac{\gamma^{\prime}(t)}{\gamma(t)}$ is non-increasing. Assume that $f_{\sigma}(t):=(\log F_{\sigma})^{\prime}(t)$ has a polynomial decay and satisfies \begin{equation}\label{estFsig}f_{\sigma}(t)\simeq \Lambda_p(F_\sigma(t)).\end{equation} Suppose that any non-negative bounded subsolution $u$ of (\ref{dtv}) in $M\times [0, \infty)$ with initial function $u_0\in L^{1}(M)\cap L^{\infty}(M)$ satisfies, for all $x\in M$ and all $t>0$, \begin{equation}\label{Linfupperintconv}||u(\cdot,t)||_{L^{\infty}\left(B(x,\frac{1}{2}t^{1/p})\right)}\leq \frac{C||u_{0}||_{L^{1}(M)}}{F_{\sigma}(t)}\exp\left(-c\left(\frac{d(x,A)}{t^{1/p}}\right)^{\frac{p}{p-1}}\right),\end{equation} where $c, C$ are positive constants. Then, for any precompact open set $\Omega\subset M$, \begin{equation}\label{uFKpoly}
\lambda_{1, p}(\Omega)\geq c\Lambda_p(C\mu(\Omega)).
\end{equation} 
\end{proposition}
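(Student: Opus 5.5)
We follow the scheme of Theorem \ref{mainthmFK}, with the volume of a ball replaced by $F_\sigma$. Let $\Omega$ be a precompact open set and $\varphi$ its first eigenfunction (Lemma \ref{constreig}), normalized by $\int_\Omega\varphi^p=1$ and extended by zero. By Lemmas \ref{boundedeigfun} and \ref{lemmaeigen}, $v(x,t)=e^{-\lambda_{1,p}(\Omega)t}\varphi(x)^{p-1}$ is a bounded non-negative subsolution with initial datum $\varphi^{p-1}\in L^1\cap L^\infty$, so (\ref{Linfupperintconv}) applies to it. We take $x=x_\Omega$, a density point of $E=\{\varphi>\frac12\mu(\Omega)^{-1/p}\}$, drop the exponential factor (it is at most $1$), and use H\"older's inequality in the form $\int\varphi^{p-1}\le\mu(\Omega)^{1/p}$, exactly as before. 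This gives, for all $t>0$,
\begin{equation*}
e^{-\lambda_{1,p}(\Omega)t}\le \frac{C\mu(\Omega)}{F_\sigma(t)} .
\end{equation*}
Thus $\lambda_{1,p}(\Omega)\ge t^{-1}\log\bigl(F_\sigma(t)/(C\mu(\Omega))\bigr)$ for every $t$, and the remaining task is to choose $t$ well.

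Since $F_\sigma$ is increasing and unbounded ($\gamma$ is), we define $t_0$ by $F_\sigma(t_0)=C\mu(\Omega)$, and then set $t_1=2t_0$. Then
\begin{equation*}
\log\frac{F_\sigma(2t_0)}{F_\sigma(t_0)}=\int_{t_0}^{2t_0}f_\sigma(s)\,ds .
\end{equation*}
By the polynomial decay (\ref{polydecay}) applied with $t=s/2$, we have $f_\sigma(s)\ge f_\sigma(2t_0)\ge c f_\sigma(t_0)$ for $s\in[t_0,2t_0]$. This uses that $f_\sigma$ is monotone non-increasing, which I expect to follow from $\Lambda_p(\gamma)$ being non-increasing together with the explicit $t^{-1}$ term; if monotonicity fails, I would fall back on applying the decay condition pointwise. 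The integral is therefore $\ge c\,t_0 f_\sigma(t_0)$, and so
\begin{equation*}
\lambda_{1,p}(\Omega)\ge \frac{c\,t_0f_\sigma(t_0)}{2t_0}=c f_\sigma(t_0)\simeq c\Lambda_p(F_\sigma(t_0))=c\Lambda_p(C\mu(\Omega))
\end{equation*}
by (\ref{estFsig}). This is (\ref{uFKpoly}).

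A technical point is that $F_\sigma(t)\to0$ as $t\to0$ requires $\gamma(0^+)=0$. If $C\mu(\Omega)$ lies below the range of $F_\sigma$, we take $t_0$ very small instead and use $f_\sigma(t)\gtrsim t^{-1}$, or we shrink constants; this small-volume case needs a brief separate check.

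The main obstacle is the lower bound on $\int_{t_0}^{2t_0}f_\sigma$, that is, turning the multiplicative growth of $F_\sigma$ on dyadic intervals into a lower bound comparable to $t_0 f_\sigma(t_0)$. This is where the polynomial decay hypothesis is essential: without it, $\log F_\sigma$ could grow only on a tiny portion of $[t_0,2t_0]$. The rest is a direct transcription of the proof of Theorem \ref{mainthmFK}.
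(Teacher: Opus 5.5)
Your proof is correct and is essentially the paper's argument. The paper picks $\tau$ with $F_\sigma(\tau)=K\mu(\Omega)$, $K>C$, and applies the mean value theorem on $[\tau,2\tau]$ where you integrate $f_\sigma$. It also uses, without saying so, that $f_\sigma$ is non-increasing, which does hold because $f_\sigma(t)=\frac{n}{p\sigma t}+\frac{c(\sigma-1)}{\sigma^{p}}\Lambda_p\bigl(\gamma(c\sigma^{-(p-1)}t)\bigr)$, so you can drop the hedge about monotonicity. Your small-volume caveat is also unnecessary: since $\gamma$ is increasing, $\gamma(c\sigma^{-(p-1)}t)$ stays bounded as $t\to0$, so the factor $t^{n/(p\sigma)}$ forces $F_\sigma(t)\to0$ and $t_0$ always exists, with no need for $\gamma(0^+)=0$.
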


\begin{proof}
Arguing as in the proof of Theorem \ref{mainthmFK} we get, for all $t>0$, 
$$ e^{-\lambda_{1, p}(\Omega)t}\leq C\frac{\mu(\Omega)}{F_{\sigma}(t)}.$$ 
Let $K>C$ and choose some $\tau>0$ such that $F_{\sigma}(\tau)=K\mu(\Omega)$. Then $e^{-2\lambda_{1, p}(\Omega)\tau}\leq \frac{CF_{\sigma}(\tau)}{KF_{\sigma}(2\tau)}$ and thus $$\lambda_{1, p}(\Omega)\geq\frac{1}{2\tau}\log\frac{F_{\sigma}(\tau)}{F_{\sigma}(2\tau)}.$$ By the mean value theorem there exists $\theta\in (\tau, 2\tau)$ such that $\frac{\log F_{\sigma}(2\tau)-\log F_{\sigma}(\tau)}{\tau}=f_{\sigma}(\theta)$ and therefore $$\lambda_{1, p}(\Omega)\geq\frac{1}{2}f_{\sigma}(\theta).$$ Using the polynomial decay of $f_{\sigma}$ and (\ref{estFsig}) it follows from our choice of $\tau$ that $$\lambda_{1, p}(\Omega)\geq c\Lambda_p(K\mu(\Omega)),$$ what was to be proved.
\end{proof}

\begin{example}
Let $\gamma(t)=t^{n/p}$, $n>p$, for large $t$. Then $\Lambda_p(\gamma(t))=\frac{\gamma^{\prime}(t)}{\gamma(t)}\simeq t^{-1}$ and thus $\Lambda_p(v)=v^{-p/n}$ for large $v$. Hence, $f_\sigma(t)\simeq t^{-1}$, which clearly has a polynomial decay. Finally, $\Lambda_p(F_\sigma(t))\simeq (t^{n/p})^{-p/n}\simeq t^{-1}$ which yields (\ref{estFsig}).
\end{example}

\begin{example}
Let $\gamma(t)=\exp(ct^{1/(\alpha+1)})$, $\alpha\geq0$, for large $t$. Then $\Lambda_p(\gamma(t))=\frac{\gamma^{\prime}(t)}{\gamma(t)}\simeq t^{-\alpha/(\alpha+1)}$ and, since $\log \gamma(t)\simeq t^{1/(\alpha+1)}$ we have $\Lambda_p(v)=(\log v)^{-\alpha}$ for large $v$. Further, $f_\sigma(t)\simeq t^{-\alpha/(\alpha+1)}$ if $\alpha>0$ and $f_\sigma(t)\simeq 1$ if $\alpha=0$, which clearly has a polynomial decay. Finally, $\Lambda_p(F_\sigma(t))\simeq (\log F_\sigma(t))^{-\alpha}\simeq t^{-\alpha/(\alpha+1)}$ which also implies (\ref{estFsig}).
\end{example}

\begin{da} \normalfont
	This article has no associated data.
\end{da}

\begin{coi} \normalfont
	The author confirms that he does not have actual or potential conflict of interest in
	relation to this publication.
\end{coi}
	
	\bibliographystyle{abbrv}
	\bibliography{librarycacc}
	
	\emph{Universit\"{a}t Bielefeld, Fakult\"{a}t f\"{u}r Mathematik, Postfach
		100131, D-33501, Bielefeld, Germany}
	
	\texttt{philipp.suerig@uni-bielefeld.de}
\end{document}